\input{diagrams.tex}
\diagramstyle[scriptlabels,height=8mm,width=8mm]

\documentclass[a4paper,12pt]{amsart}
%{article}
\usepackage[utf8]{inputenc}
\usepackage[english]{babel} 

\usepackage[T1]{fontenc}

\usepackage{amsthm}
\usepackage{amsmath}
\usepackage{amsfonts}
\usepackage{mathtools}
\usepackage[colorlinks]{hyperref}
\usepackage{enumerate}

\usepackage{tikz,pgf}
\usetikzlibrary{calc}

%%%%%%%%%%%%%%%%%%%%%%%%%%%%%%%%%%%%%%%%%%%

\def\bdi{\begin{diagram}}
\def\edi{\end{diagram}}

%%%%%%%%%%%%%%%%%%%%%%%%%%%%%%%%%%%%%%%%%%%%%%%%%

%\theoremstyle{plain}
%\newtheorem{theorem}{Theorem}[section]
\newtheorem{thm}{Theorem}[section]
\newtheorem{cor}[thm]{Corollary}
\newtheorem{lem}[thm]{Lemma}
\newtheorem{prop}[thm]{Proposition}
\theoremstyle{definition}
\newtheorem{defi}[thm]{Definition}
\newtheorem{defis}[thm]{Definitions}
\newtheorem{conj}[thm]{Conjecture}
\newtheorem{conv}[thm]{Convention}
\newtheorem{nota}[thm]{Notation}
\newtheorem{rem}[thm]{Remark}
\newtheorem{rems}[thm]{Remarks}
\newtheorem{exa}[thm]{Example}
\newtheorem{exas}[thm]{Examples}
\newtheorem{prob}[thm]{Problem}
\newtheorem{probs}[thm]{Problems}
\newtheorem{ques}[thm]{Question}
\newtheorem{sett}[thm]{Setting}
\newtheorem{sit}[thm]{}

%%%%%%%%%%%%%%%%%%%%%%%%%%%%%%%%%%%%%%%%%%%%%%%%

\newcommand{\Spec}{\operatorname{{\rm Spec}}}

\newcommand{\Sing}{\operatorname{{\rm Sing}}}

\newcommand{\Ker}{ \operatorname{{\rm Ker}}}

\newcommand{\Aut}{ \operatorname{{\rm Aut}}}
\newcommand{\End}{ \operatorname{{\rm End}}}

\newcommand{\PGL}{\operatorname{{\rm PGL}}}
\newcommand{\SL}{ \operatorname{{\rm SL}}}

\newcommand{\supp}{ \operatorname{{\rm supp}}}
\newcommand{\lie}{ \operatorname{{\rm lie}}}

\def\sl{\mathfrak{sl}}

\def\codim{\mathop{\rm codim}}

\def\Pic{\mathop{\rm Pic}}

\def\card{\mathop{\rm card}}

\renewcommand{\epsilon}{\varepsilon}

\def\and{\quad\mbox{and}\quad}

\newcommand{\F}{\ensuremath{\mathbb{F}}}

\newcommand{\G}{\ensuremath{\mathbb{G}}}

\newcommand{\A}{\ensuremath{\mathbb{A}}}

\newcommand{\kk}[1]{\bk^{[#1]}}

%%%%%%%%%%%%%%%%%%%%%%%%%%%%%%%%%%%%%%

\newcommand{\hU}{{\hat U}}

\newcommand{\hZ}{{\hat Z}}

\newcommand{\hE}{{\hat E}}

\newcommand{\hO}{{\hat O}}

\newcommand{\hp}{{\hat p}}
\newcommand{\hs}{{\hat s}}

%%%%%%%%%%%%%%%%%%%%%%%%%%%%%%%%

\newcommand{\tB}{{\tilde B}}
\newcommand{\tC}{{\tilde C}}

\newcommand{\tF}{{\tilde F}}

\newcommand{\tM}{{\tilde M}}

\newcommand{\tU}{{\tilde U}}

\newcommand{\tX}{{\tilde X}}
\newcommand{\tY}{{\tilde Y}}
\newcommand{\tZ}{{\tilde Z}}

\newcommand{\tE}{{\tilde E}}

\newcommand{\tO}{{\tilde O}}

\newcommand{\te}{{\tilde e}}
\newcommand{\tp}{{\tilde p}}
\newcommand{\ts}{{\tilde s}}

\newcommand{\tx}{{\tilde x}}

%%%%%%%%%%%%%%%%%%%%%%%%%%%%%%%%%%
\newcommand{\hY}{{\hat Y}}

%%%%%%%%%%%%%%%%%%%%%%%%%%%%%%%%%%%%%

\newcommand{\bY}{{\bar Y}}

%%%%%%%%%%%%%%%%%%%%%%%%%%%%%%%%%%%%%%%

\def\fg{{\mathfrak g}}
\def\fh{{\mathfrak h}}

\def\fh{{\mathfrak h}}

%%%%%%%%%%%% mathcal %%%%%%%%%%%%%%%%%%%

\newcommand{\cF}{{\ensuremath{\mathcal{F}}}}

\newcommand{\cE}{{\ensuremath{\mathcal{E}}}}
\newcommand{\cA}{{\ensuremath{\mathcal{A}}}}

\newcommand{\cO}{{\ensuremath{\mathcal{O}}}}

\renewcommand{\rho}{\varrho}

%%%%%%%%%%%%%%%%%%%%%%%%%%%%%%%%%%%%%%%%%%%%%%%%%%

%%%%%%%%%%%%%%%%%%%%%%%%%%%%%%%%%%%%%%%%%%%%%%%%%%%

\def\bals#1\eals{\begin{align*}#1\end{align*}}
\def\bal#1\eal{\begin{align}#1\end{align}}

\def\SL{\mathop{\rm SL}}

\def\kk{{\mathbb K}}
\def\A{{\mathbb A}}

\def\NN{{\mathbb N}}
\def\ZZ{{\mathbb Z}}
\def\QQ{{\mathbb Q}}

\def\PP{{\mathbb P}}

\renewcommand{\phi}{\varphi}

\newcommand{\bnum}{\begin{enumerate}}
\newcommand{\enum}{\end{enumerate}}

\addtolength{\topmargin}{-12mm} \addtolength{\textheight}{0.5cm}
\setlength{\textwidth}{15.5cm} \addtolength{\oddsidemargin}{-1cm}
\addtolength{\evensidemargin}{-1cm} \raggedbottom

\newcommand{\brem}{\begin{rem}}
\newcommand{\brems}{\begin{rems}}
\newcommand{\erem}{\end{rem}}
\newcommand{\erems}{\end{rems}}
\newcommand{\bprob}{\begin{prob}}
\newcommand{\eprob}{\end{prob}}
\newcommand{\bprobs}{\begin{probs}}
\newcommand{\eprobs}{\end{probs}}
\newcommand{\bques}{\begin{ques}}
\newcommand{\eques}{\end{ques}}
\newcommand{\bexa}{\begin{exa}}
\newcommand{\bexas}{\begin{exas}}
\newcommand{\eexa}{\end{exa}}
\newcommand{\eexas}{\end{exas}}
\newcommand{\bdefi}{\begin{defi}}
\newcommand{\edefi}{\end{defi}}
\newcommand{\bdefis}{\begin{defis}}
\newcommand{\edefis}{\end{defis}}
\newcommand{\bcor}{\begin{cor}}
\newcommand{\ecor}{\end{cor}}
\newcommand{\blem}{\begin{lem}}
\newcommand{\elem}{\end{lem}}
\newcommand{\bconv}{\begin{conv}}
\newcommand{\econv}{\end{conv}}
\newcommand{\bconj}{\begin{conj}}
\newcommand{\econj}{\end{conj}}
\newcommand{\bprop}{\begin{prop}}
\newcommand{\eprop}{\end{prop}}
\newcommand{\bthm}{\begin{thm}}
\newcommand{\ethm}{\end{thm}}
\newcommand{\bnota}{\begin{nota}}
\newcommand{\enota}{\end{nota}}
\newcommand{\bsit}{\begin{sit}}
\newcommand{\esit}{\end{sit}}
\newcommand{\be}{\begin{equation}}
\newcommand{\ee}{\end{equation}}
\newcommand{\bproof}{\begin{proof}}
\newcommand{\eproof}{\end{proof}}
\newcommand{\bsett}{\begin{sett}}
\newcommand{\esett}{\end{sett}}
\def\ba{\begin{array}}
\def\ea{\end{array}}

\setcounter{secnumdepth}{2}

%%%%%%%%%%%%%%%%%%%%%%%%%%%%%%%%%%%%%%%%%%%%%%%%%%%%%

\begin{document}
\title[Gromov ellipticity of cones]{Gromov ellipticity 
of cones over projective manifolds}

\author{S.~Kaliman and M.~Zaidenberg}
\address{Department of Mathematics,
University of Miami, Coral Gables, FL 33124, USA}
\email{kaliman@math.miami.edu}
\address{Univ. Grenoble Alpes, CNRS, IF, 38000 Grenoble, France}
\email{mikhail.zaidenberg@univ-grenoble-alpes.fr}

\thanks{2020 \emph{Mathematics Subject Classification.} 
Primary 14R10, 14D99; Secondary 32Q56.} 
\keywords{Gromov ellipticity, spray, projective variety, 
affine cone, flexible variety, $\G_{\mathrm a}$-action.}

\date{}
\maketitle

\begin{abstract} We find classes of projective 
manifolds that are elliptic  in the sense of 
Gromov and such that the affine cones over these 
manifolds also are elliptic off their vertices. 
For example, the latter holds 
for any generalized flag manifold of dimension $n\ge 3$ 
successively blown up in 
a finite set of points and infinitesimally near points. This also holds
for any smooth projective rational surface. 
For the affine cones, the Gromov ellipticity is 
a much weaker property than the flexibility. 
Nonetheless, it still implies the infinite transitivity 
of the action of the endomorphism monoid on these cones. 
\end{abstract}

{\footnotesize \tableofcontents}

\section*{Introduction}
Let $\kk$ be an algebraically closed field  
of characteristic zero and $\A^n$ resp. $\PP^n$ 
be the affine resp. projective 
$n$-space over $\kk$. All varieties and vector bundles 
in this paper are algebraic. 
By \emph{spray} resp. \emph{ellipticity} 
we mean algebraic spray resp. algebraic ellipticity. 
Let $X$ be a smooth algebraic variety.
Recall that a \emph{spray of rank $r$} 
over $X$ is a triple $(E,p,s)$ where
$p\colon E\to X$ is  a vector bundle 
of rank $r$ with zero section $Z$ 
and $s\colon E\to X$ is a morphism  
such that $s|_Z=p|_Z$. A spray $(E,p,s)$ is 
\emph{dominating at $x\in X$} if the restriction 
$s|_{E_x}\colon E_x\to X$ 
to the fiber $E_x=p^{-1}(x)$ is dominant 
at the origin $0_x\in Z\cap E_x$ 
of the vector space $E_x$.
The constructive subset $s(E_x)\subset X$ 
is called the \emph{$s$-orbit of $x$}.  
The variety $X$ is \emph{elliptic} if it admits 
a spray $(E,p,s)$ which is dominating 
at each point $x\in X$. 
The ellipticity is equivalent to the weaker properties 
of local ellipticity and subellipticity, 
see \cite[Theorem 0.1]{KZ23}. 
One says that $X$ is \emph{locally elliptic} 
if for any $x\in X$ there is a local spray 
$(E_x,p_x,s_x)$ defined in 
a neighborhood of $x$ and  dominating at $x$ 
such that $s_x$ takes values in $X$. 
The variety $X$ is called 
\emph{subelliptic} if it
admits a family of sprays $(E_i,p_i,s_i)$ defined 
on $X$ which is dominating at each 
point $x\in X$, that is,
\[T_xX=\sum_{i=1}^n {\rm d}s_i (T_{0_{i,x}} E_{i,x}) 
\quad \forall x\in X.\]

The ellipticity implies many other useful properties; 
it is important in 
the Oka-Grauert theory, see, e.g., \cite{Gro89}, 
\cite{For17a} and  \cite{For23}. In particular, 
the semigroup of endomorphisms $\End(X)$ 
of an elliptic variety $X$ acts highly 
transitivity of $X$, see Appendix A in 
Section \ref{app:ht}.

In this paper we establish the ellipticity of 
 affine cones over certain elliptic varieties, 
see Theorems \ref{con.t1} 
and \ref{con.t2}. Summarizing, we prove the 
following theorem. Recall that 
a \emph{generalized affine cone} $\hat Y$ 
over a smooth projective variety $X$ defined 
by an ample $\QQ$-divisor $D$ on 
$X$ is the affine variety
\[\hat Y=\Spec\left(\bigoplus_{n=0}^\infty 
H^0\left(X,\cO_X(\lfloor nD\rfloor)\right)\right),\]
see, e.g., \cite[Sec. 1.15]{KPZ13}. 
In the case where  $D$ is a hyperplane section of 
$X\subset \PP^n$ the cone $\hat Y$ 
is the usual affine cone over $X$. 
\bthm\label{mthm} Let $X\subset\PP^n$ be a smooth 
projective variety, $\hat Y$ 
be a generalized affine cone over $X$, and 
$Y=\hat Y\setminus\{v\}$ where $v$ 
is the vertex of $\hat Y$. Then $Y$ 
is elliptic in the following cases:
\begin{itemize}\item $X$ is a rational surface;
\item $X$ is obtained via a sequence of successive 
blowups of points starting from 
a variety $X_0$ of dimension 
$n\ge 3$ which belongs to class $\cA_0$ 
and verifies condition $(**)$ of Proposition \ref{con.p1}. 
The latter holds for any generalized flag variety $X_0=G/P$ 
of dimension 
$n\ge 3$ where $G$ is a reductive group and $P$ 
is a parabolic subgroup of $G$. In particular, this holds for
$X_0=\PP^n$, $n\ge 3$, the Grassmannians, etc.
\end{itemize}
\ethm
Recall that a variety $X$ belongs to class $\cA_0$ if $X$ 
can be covered by 
open sets isomorphic to $\A^n$. 
A smooth projective surface 
belongs to class $\cA_0$ if and only if it is rational. 
Any variety of class $\cA_0$ is elliptic, see 
\cite[Proposition 6.4.5]{For17a} and \cite[Theorem 0.1]{KZ23}.

It is known that a variety of class $\cA_0$ 
blown up along a smooth closed subvariety is 
elliptic, see \cite{LT17} and \cite{KZ23}; cf. also \cite[Proposition 3.5E]{Gro89}. 
The same holds for a smooth locally stably flexible variety, 
see \cite{KKT18}. The following question arises.

\medskip

\noindent {\bf Question.} \emph{Is it true that the ellipticity 
of affine cones $Y$ over a smooth variety $X$ survives the blowup 
of a smooth closed subvariety of $X$?
Or, more specifically, whether the curve-orbit property $(*)$, 
its enhanced version $(**)$ (see Definition \ref{def:co} 
and Proposition \ref{con.p1}) 
and their suitable analogs survive such blowups?} 

\medskip

Generalizing an earlier result 
of Forstneri\^{c} \cite{For17b} 
valid for complete elliptic varieties,  
Kusakabe \cite{Kus22} established that any smooth elliptic variety 
admits a surjective morphism $\A^{\dim Y+1}\to Y$. 
This fact immediately implies the
following corollary. 
\bcor\label{cor:ht} Let $Y$ be a smooth elliptic variety.
Then the endomorphism monoid $\End(Y)$ acts highly 
transitively on $Y$, that is, 
for any $k\ge 1$ and any two 
corteges of distinct points $(y_1,\ldots,y_k)$ and 
$(y'_1,\ldots,y'_k)$ in $Y$ there exists 
$f\in\End(Y)$ such that $f(y_i)=y_i'$. 
\ecor
This corollary can be applied in particular 
to  generalized affine cones $Y$ as in Theorem \ref{mthm}. 
Let us compare Theorem \ref{mthm} with analogous
 facts concerning 
the flexibility of affine cones. 
Recall that a (quasi)affine variety $X$ of dimension 
$n\ge 2$ is said to be \emph{flexible} 
if for any smooth point $x\in X$ there exist $n$ 
locally nilpotent derivations of the algebra 
$\cO_X(X)$ whose vectors at $x$ span 
the tangent space $T_xV$, see  \cite{AFKKZ13}. 
Any smooth flexible variety is elliptic. 
The flexibility of $X$ implies that for any natural number 
$m$ the automorphism group $\Aut(X)$ 
acts $m$-transitively on the smooth locus of $X$, 
see \cite{AFKKZ13} in the affine case and \cite{FKZ16} 
in the quasiaffine case. 

Let $X=X_d$ be a del Pezzo surface of degree $d\ge 3$  
anticanonically embedded in $\PP^{d}$, and let 
$\hY\subset \A^{d+1}$ be the affine cone over $X$. 
It is known that for $d\ge 4$ the cone $\hY$ 
is flexible, see \cite{Per13}; cf.\ also \cite[Section 4]{MPS18} 
and the survey article \cite{CPPZ21} 
for other examples of this kind. 
By contrast, if $d\le 3$ and $X_d\hookrightarrow\PP^n$ 
is a pluri-anticanonical embedding, 
then the algebra $\cO(\hY)$ does not admit 
any nonzero locally nilpotent derivation, 
see \cite{CPW16} and also (for $d=1,2$) \cite{KPZ11}. 
In these cases $\hY$ is not flexible. 
However, by Theorem \ref{mthm}
for any del Pezzo surface $X=X_d$, $d\in\{1,\ldots,9\}$ 
and any ample polarization $D$ on $X$ 
the smooth locus $Y$ of the generalized affine cone $\hY$ 
over $X$ corresponding to $D$ is elliptic. 
As a corollary, the smooth quasiaffine threefold $Y$ 
admits a surjective 
morphism $\A^4\to Y$
and the endomorphism monoid $\End(Y)$ 
acts highly transitively on $Y$, see Section \ref{app:ht}. 

The structure of the paper is as follows. 
In Section \ref{sec:prelim} we recall some general 
facts about Gromov sprays. In particular, 
we need a version of the Gromov lemma 
on extension of sprays, 
see Proposition \ref{lem:ext} and Appendix B 
(Section \ref{app:el}) for a proof. 
Besides, we show that under certain conditions a
spray admits a pullback 
via a blowup, see Proposition \ref{cr.l2}. 
In Section \ref{sec:cones} we notice first that 
the ellipticity of a projective variety $X$ is necessary 
for the ellipticity of the affine cone over $X$, 
see Proposition \ref{prop:Gro}. We do not know 
whether this condition alone is sufficient. 
However, the ellipticity of $X$ together 
with an additional condition called the 
\emph{curve-orbit property}, 
see Definition \ref{def:co} and Proposition \ref{con.p1}, 
guarantees that 
the affine cone over $X$ is elliptic, see Corollary \ref{spr.cor2}. 
Theorem \ref{mthm} is proven in 
Section \ref{sec:A0} for varieties of dimension $\ge 3$ 
and in Section \ref{sec:surf} for surfaces, see Theorems 
\ref{con.t1} and \ref{con.t2}, 
respectively. Using results 
of Forstneri\^{c} \cite{For17b} and Kusakabe \cite{Kus22}, 
in  Appendix A (Section \ref{app:ht}) 
we deduce Corollary \ref{cor:ht}; see also Proposition 
\ref{prop:ht} and Corollary \ref{cor:Kus2} 
for  somewhat stronger results.  
\section{Preliminaries}\label{sec:prelim}
In this section $X$ is a smooth algebraic variety of positive dimension.

The following proposition is a version of the
Gromov Localization Lemma  adopted to our setting; 
see \cite[3.5.B]{Gro89} for a sketch of the proof, 
\cite[Propositions 6.4.1--6.4.2]{For17a} for a rigorous 
proof for quasi-projective varieties and \cite[Remark 3]{LT17}
for its extension 
to the general case. 
\begin{prop}\label{lem:ext} 
Let $D$ be a reduced effective divisor on $X$
and $(E,p,s)$  be a spray  
on $U=X\setminus {\rm supp}(D)$  with values in $X$ 
such that $p\colon E\to X$ is a trivial bundle of rank $r\ge 1$.
Then there exists a spray $(\tilde E,\tilde p,\tilde s)$ on $X$ 
whose restriction to $U$ is isomorphic to $(E,p,s)$ and 
such that 
$\tilde s|_{{\tilde p}^{-1}(X\setminus U)}=
\tilde p|_{{\tilde p}^{-1}(X\setminus U)}$ 
and for each $x \in U$ the $\ts$-orbit of 
$x$ coincides with the $s$-orbit of $x$. 
\end{prop}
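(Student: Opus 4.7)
The plan is to reproduce Gromov's construction in the algebraic category. Since $E$ is trivial on $U$, the spray is a morphism $s\colon U\times\A^r\to X$ with $s(u,0)=u$. The idea is to replace the fiber parameter $t$ by $f^N t$, where $f$ is a local defining equation of $D$ and $N$ is a large integer chosen both to clear the poles of $s$ along $D$ and to force the extended spray to be the projection on $\tp^{-1}(D)$.

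First, consider the affine principal case: assume $X$ is affine and $D=\{f=0\}$ with $f\in\cO(X)$, so $\cO(U)=\cO(X)[1/f]$. Writing the comorphism as
\[s^*(g)=g+\sum_{|\alpha|\ge 1}g_{\alpha}\,t^{\alpha}\in\cO(X)[1/f][t_1,\dots,t_r],\qquad g_\alpha=h_\alpha/f^{k_\alpha}\text{ with }h_\alpha\in\cO(X),\]
only finitely many generators $g$ of $\cO(X)$ and finitely many coefficients $g_\alpha$ appear. Choose $N>\max_{g,\,|\alpha|\ge 1}k_\alpha/|\alpha|$; then each $f^{N|\alpha|}g_\alpha=f^{N|\alpha|-k_\alpha}h_\alpha$ is regular on $X$ and vanishes along $D$. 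Define $\ts\colon X\times\A^r\to X$ by
\[\ts^*(g)=g+\sum_{|\alpha|\ge 1}(f^{N|\alpha|}g_{\alpha})\,t^{\alpha},\]
which on $U\times\A^r$ coincides with $(x,t)\mapsto s(x,f^N(x)t)$. Then $\ts$ extends $s$ regularly across $D$, restricts to the identity on the zero section, and satisfies $\ts|_{D\times\A^r}=\pr_D$ since every positive-degree coefficient vanishes on $D$. For $x\in U$ the fiber map $t\mapsto f^N(x)t$ is an automorphism of $\A^r$, so the $\ts$-orbit of $x$ coincides with the $s$-orbit of $x$.

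To globalize, cover $X$ by finitely many affine opens $X_\lambda$ on which $D$ is principal with local equation $f_\lambda$ (possible since $D$ is Cartier on the smooth quasi-compact variety $X$). Perform the affine construction on each $X_\lambda$ with a uniform $N$ equal to the maximum of the finitely many local thresholds, producing local sprays $\ts_\lambda(x,t)=s(x,f_\lambda^N(x)t)$. On an overlap $X_\lambda\cap X_\mu$ one has $f_\lambda=u_{\lambda\mu}f_\mu$ with $u_{\lambda\mu}\in\cO^*(X_\lambda\cap X_\mu)$, whence $\ts_\lambda(x,t)=\ts_\mu(x,u_{\lambda\mu}^N(x)t)$. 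The cocycle $\{u_{\lambda\mu}^N\}$ defines a line bundle $\mathcal{L}$ on $X$ (a power of $\cO_X(\pm D)$); the local trivial fiber spaces glue to $\tE:=\mathcal{L}^{\oplus r}$ and the local sprays glue to a global morphism $\ts\colon\tE\to X$. The canonical section of $\cO_X(D)$ trivializes $\mathcal{L}$ on $U$ and identifies $(\tE|_U,\tp|_U,\ts|_{\tE|_U})$ with $(E,p,s)$.

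The main obstacle is choosing $N$ to satisfy three conditions simultaneously: regularity of the extension across $D$, \emph{strict} vanishing of the positive-degree coefficients on $D$ (which is what promotes the extension into the projection rather than an arbitrary regular morphism, thereby yielding $\ts|_{\tp^{-1}(X\setminus U)}=\tp|_{\tp^{-1}(X\setminus U)}$), and uniformity across the finite affine cover so that the cocycle gluing is consistent. Finiteness of the generators, coefficients, and patches makes a single large $N$ available; this is the technical heart of the Gromov localization lemma.
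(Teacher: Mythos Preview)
Your overall strategy coincides with the paper's: rescale the fibre coordinate by~$f^N$ (equivalently, replace $E$ by $E\otimes\cO_X(-ND)$) and take $N$ large. Your treatment of the genuinely affine case, where the \emph{target} $X$ is affine, is clean and correct: then $s^*(g)\in\cO(U)[t_1,\dots,t_r]$ really is polynomial in $t$, the finitely many coefficients have bounded pole order along $D$, and a single $N$ clears them with a spare factor of $f$ left over to force $\ts|_{D\times\A^r}=\pr_D$.

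The gap is in the globalization. ``Perform the affine construction on each $X_\lambda$'' presupposes that the target is $X_\lambda$, but the spray $s\colon (X_\lambda\cap U)\times\A^r\to X$ takes values in all of $X$, which is not assumed affine. Once you pass to a target chart $\omega\subset X$, the pullback $s^*(g)$ for $g\in\cO(\omega)$ is only regular on the open set $s^{-1}(\omega)$, and there it is \emph{rational} in $t$, not polynomial. Concretely, take $X=\PP^1$, $D=\{0\}$, $U=\PP^1\setminus\{0\}$, and the $\G_{\mathrm a}$-spray $s(z,t)=z/(1+zt)$ (translation in the coordinate $w=1/z$). On the chart $X_1=\A^1_z$ one has $s^*(z)=z/(1+zt)$, which is not in $\cO(U)[t]$; your coefficient bound $\max k_\alpha$ does not exist, and the argument as written does not produce an $N$.

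This is exactly why the paper's local analysis writes each coordinate of $s$ in the form $q(v)/(h^k r(v))$ with a denominator $r$ genuinely depending on $v$ (but satisfying $r(0)\neq 0$ on the relevant locus), and then checks that after the substitution $v\mapsto h^n v$ the resulting function becomes regular \emph{near the zero section}; a second application of $\phi_1$ is then used to propagate regularity from a neighbourhood of the zero section to all of $(\supp D)\times\A^r$. That two-step structure and the denominator bookkeeping are not cosmetic: they are precisely what bridges the gap between your affine case and the general one. Your gluing via the cocycle $u_{\lambda\mu}^N$ into $\tE=\cL^{\oplus r}$ is correct and matches the paper's $E_n=E\otimes\cO_X(-nD)$; what is missing is a valid argument that each local $\ts_\lambda$ actually extends across $D\cap X_\lambda$ when the target is not affine.
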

For the reader's convenience 
we provide a proof in Appendix B, see Section \ref{app:el}.

Recall the notions of a locally nilpotent vector field and of its replica, 
see \cite{Fre06} and \cite{AFKKZ13}. 
\bdefi\label{ex:replica} Consider a regular vector field $\nu$ 
on a smooth affine variety 
$X=\Spec A$ and the associated derivation $\partial$ 
of the algebra $A=\cO_X(X)$. 
One says that $\nu$ and $\partial$ are \emph{locally nilpotent} 
if for any $a\in A$
one has $\partial^n(a)=0$ for some $n=n(a)\in\NN$.  Any 
$\G_{\mathrm a}$-action $\lambda$ on $X$ is the flow of a locally 
nilpotent vector field $\nu$. 
One writes in this case $\lambda(t)=\exp(t\nu)$. 
For any $h\in \Ker (\partial)$ 
the vector field $h\nu$ is also locally nilpotent. 
The $\G_{\mathrm a}$-action $\lambda_h(t):=\exp(th\nu)$ is called 
a \emph{replica} of $\lambda$. 
The points $x\in h^{-1}(0)$ are fixed by the replica $\lambda_h$. 
\edefi
In the sequel we deal with sprays generated 
by $\G_{\mathrm a}$-actions. 
\bdefi\label{ex:Ga-spray} Let $U$ be an affine 
dense open subset in $X$. 
Assume that $U$ is equipped with an effective 
$\G_{\mathrm a}$-action $\G_{\mathrm a}\times U\to U$. 
The latter morphism 
defines a rank $1$ spray $(L,p,s)$ on $U$ 
which we call 
a \emph{$\G_{\mathrm a}$-spray}, where  
$p\colon L\to U$ is a trivial line bundle. 
If $(L,p,s)$ is a $\G_{\mathrm a}$-spray on $U$ 
associated with a $\G_{\mathrm a}$-action 
$\lambda$ on $U$ and $\lambda_h$ is 
a  replica of $\lambda$, 
then the associated 
$\G_{\mathrm a}$-spray $(L,p,s_h)$ on $U$ 
also will be called  a \emph{replica} of $(L,p,s)$.
\edefi
\bdefi\label{def:ext-spray} 
Let $U=X\setminus\supp(D)$ where $D$ 
is an ample divisor on $X$ and let $(L,p,s)$ 
be a $\G_{\mathrm a}$-spray on $U$.
Due to Proposition \ref{lem:ext}  there exists 
a rank 1 spray $(\bar L,\bar p,\bar s)$ on $X$ whose restriction to $U$ 
is isomorphic to $(L,p,s)$ and such that 
$\bar s|_{{\bar p}^{-1}(X\setminus U)}=\bar p|_{{\bar p}^{-1}(X\setminus U)}$.
We call $(\bar L,\bar p,\bar s)$ an \emph{extended $\G_{\mathrm a}$-spray}. 
For a point $x\in U$ the $s$-orbit $O_x$ coincides with the $\G_{\mathrm a}$-orbit 
of $x$, while the $s$-orbit of $x\in X\setminus U$ is the singleton $\{x\}$. 
For any one-dimensional $\G_{\mathrm a}$-orbit $O$ in $U$ the restriction of 
$(\bar L,\bar p,\bar s)$ to $O$ is dominating. 
\edefi
It is known that under certain conditions dominating sprays 
can be lifted via blowups, see \cite[3.5E-E$''$]{Gro89}, \cite{LT17} 
and \cite{KKT18}. 
In the next proposition we provide a simple version of such results
adopted to our setup. 
\bprop\label{cr.l2}  Let  $(E,p,s)$ be an extended $\G_{\mathrm a}$-spray on $X$, 
$A\subset X$ 
be a smooth subvariety  of codimension at least $2$ such that the
$s$-orbit of any point $x\in A$ is a singleton, and 
$\sigma \colon \tX \to X$ 
be the blowup of $A$ in $X$. 
Then $(E,p,s)$ can be lifted to an extended $\G_{\mathrm a}$-spray 
$(\tE,\tp,\ts)$ on $\tX$ where 
$\tE=\sigma^*E$. Letting $\tilde\sigma\colon \tE\to E$ 
be the induced homomorphism, $\ts\colon \tE\to \tX$ 
satisfies $\sigma\circ\ts=s\circ\tilde\sigma$.
In particular, $\sigma$ sends 
the $\ts$-orbits to $s$-orbits.  Furthermore, if $(E,p,s)$ 
is dominating when restricted 
to a one-dimensional $s$-orbit $O$ and $\tO$ is 
a $\ts$-orbit in $\tX$
with $\sigma(\tO)=O$ then  $(\tE,\tp,\ts)$ is dominating  
when restricted to $\tO$. 
\eprop
\bproof 
Let $U\subset X$ be the affine dense open subset carrying 
an effective 
$\G_{\mathrm a}$-action $\lambda$ such that $(E,p,s)$ is extended 
from the associate 
$\G_{\mathrm a}$-spray on $U$, see Definition \ref{ex:Ga-spray}. 
Since $U$ is affine, 
$D=X\setminus U$ is a divisor. 
The $s$-orbit of $x\in U$ 
is a singleton if and only if $x$ is a fixed point of $\lambda$. 
By our assumption, each $x\in A\cap U$ is a fixed point of 
$\lambda$. So,
the ideal $I(A\cap U)$ is stable under $\lambda$.
By the universal property of blowups, 
see \cite[Corollary II.7.15]{Har04},
$\lambda$ 
admits a lift to a $\G_{\mathrm a}$-action $\tilde\lambda$ on 
$\tilde U=\sigma^{-1}(U)$ 
making the morphism 
$\sigma|_{\tilde U}\colon\tilde U\to U$ equivariant. 
The associated $\G_{\mathrm a}$-spray $\ts$ on $\tilde U$ satisfies 
$\sigma\circ\ts|_{\tU}=s\circ\tilde\sigma|_{\tU}$. 
By Proposition \ref{lem:ext}
$\ts$ admits an extension to $\tX$ denoted still 
by $\ts$ such that 
$\sigma\circ\ts=s\circ\tilde\sigma$.
The remaining assertions are easy consequences 
of the construction and the latter equality. 
\eproof
\section{Ellipticity of cones}\label{sec:cones} 
We introduce below the so called ``curve-orbit'' property $(*)$, 
see Definition \ref{def:co}. 
Using this property we give a criterion 
of ellipticity of the affine cones 
over projective varieties with the vertex removed. 

Recall that blowing up an affine cone 
at its vertex yields a line bundle 
on the underlined projective variety. 
Removing the vertex of the cone results 
in removing the zero section of the latter line bundle. 
This results in a locally trivial fiber bundle 
whose general fiber is
the punctured affine line $\A^1_*=\A^1\setminus\{0\}$.
In subsection \ref{ss:cones} we deal more generally with
locally trivial $\A^1_*$-fibrations. 
In subsection \ref{ss:twists} we extend our results 
to twisted $\A^1_*$-fibrations 
which are not locally trivial. In subsection \ref{ss:examples} 
we give several examples of elliptic cones, 
including cones over flag varieties.

The following fact is stated in 
\cite[3.5B$''$]{Gro89}. For the reader's convenience 
we give an argument. 
\bprop\label{prop:Gro}
Let $\rho\colon Y\to X$ be a locally trivial fiber bundle. 
If $Y$ is elliptic then $X$ is elliptic too. 
\eprop
\bproof Let us show that $X$ is locally elliptic. 
Let $(\hE,\hp,\hs)$ be a dominating spray on $Y$. For a point $x\in X$ 
choose a neighborhood $U$ of $x$ in $X$ such that the restriction 
$Y|_U\to U$ is a trivial fiber bundle. Let $\xi\colon U\to Y$ be a section 
of $Y|_U\to U$, let $p\colon E_U:=\xi^*\hE\to U$ be the induced 
vector bundle over $U$ and 
$\varphi\colon E_U\stackrel{\cong}{\longrightarrow} \hE|_{\xi(U)}$ 
be the induced isomorphism. 
Letting $s=\rho\circ\hs\circ\varphi\colon E_U\to X$ we obtain 
a local spray $(E_U, p, s)$ on $U$ with values in $X$.  
Indeed, let $\hZ$ be the zero section of $\hp\colon\hE\to Y$ and $Z_U$ 
be the zero section of $p\colon E_U\to U$. Since $\hs|_\hZ=\hp|_\hZ$ 
we have $s|_{Z_U}=p|_{Z_U}$.

Let $y=\xi(x)\in \hU=\rho^{-1}(U)$. Let  $0_y$ 
be the origin of the vector space $\hE_y=\hp^{-1}(y)$. 
Since $(\hE,\hp,\hs)$ is dominating, 
$d\hs|_{T_{0_y}\hE_y}\colon T_{0_y}\hE_y\to T_yY$ is surjective.  
This yields a surjection 
\[ds=d\rho\circ d\hs\circ d\varphi|_{T_{0_x}E_x}\colon  
T_{0_x}E_x\stackrel{\cong}{\longrightarrow} 
T_{0_y}\hE_y \to T_yY\to T_xX.\] 
Thus, the spray $(E_U,p,s)$ is dominating at $x$. 
This shows that $X$ is locally elliptic, hence elliptic, 
see \cite[Theorem 0.1]{KZ23}.
\eproof  
The natural question arises whether the converse 
implication holds. 
Clearly, the product $X\times\A^1_*$  is not elliptic even if $X$ 
is elliptic. 
However, we indicate below some particular settings 
where the answer is affirmative. 
\subsection{Technical lemmas}\label{ss:lemmas} 
In what follows $X$ stands for a smooth algebraic variety, 
$\rho \colon F \to X$ 
stands for a line bundle on $X$ with zero section $Z_F$, 
and $Y=F\setminus Z_F$
(except for Proposition \ref{prop:Gro}). 
Thus, $\rho|_Y\colon Y\to X$ is a locally trivial fiber bundle with fiber 
$\A^1_*=\A^1\setminus\{0\}$. In fact, any locally trivial fiber bundle $ Y\to X$ 
with general fiber $\A^1_*$ arises in this way. 

In Lemmas \ref{spr.l1}--\ref{spr.new.c}
we work as well in the $\ZZ/2\ZZ$-equivariant setup 
in order to prepare 
tools for dealing with twisted $\A^1_*$-fibrations, 
see subsection \ref{ss:twists}. 
In the sequel $\mu_2\cong\ZZ/2\ZZ$ stands 
for the group of square roots of unity.
\blem\label{spr.l1} Let $p: E\to X$ be a vector bundle with zero section
$Z$ and $\tau_i\colon F_i\to E$, $i=1,2$
be two line bundles on $E$. Assume that there exists an isomorphism 
$\varphi_0\colon F_1|_Z\to F_2|_Z$.
Then $\varphi_0$ extends to an isomorphism $\varphi \colon F_1\to F_2$. 

Furthermore, suppose that $X,E$ and $F_i|_Z$ 
are equipped with $\mu_2$-actions
such that $p, \,\varphi_0$ and $\tau_i|_Z$  for $i=1,2$
are $\mu_2$-equivariant.
Then there are $\mu_2$-actions on $F_i$ such that  $\varphi$ 
and $\tau_i$ for $i=1,2$ are $\mu_2$-equivariant.
\elem
\bproof  
The pullback yields an isomorphism $\Pic(E)\cong p^*\Pic(X)$, 
see \cite[Theorem 5]{Mag75}. 
Hence, $p^*(F_i|_Z)\cong F_i$ for $i=1,2$. 
Now the lemma follows. 
\eproof 
Due to the following lemma a spray on $X$ admits a pullback to $Y$. 
\blem\label{spr.p1} Any spray $(E,p,s)$ on $X$ induces a spray 
$(\hE,\hp,\hs)$ on $Y=F\setminus Z_F$ such that
$\hE$ fits in commutative diagrams
\begin{equation}\label{eq:1}
\begin{array}{ccc} \hE &  \stackrel{{\hp}}{\longrightarrow} & Y\\
\, \, \, \, \downarrow^{\hat \rho}   & 
& \, \, \, \,\,\, \downarrow^{\rho|_Y}\\
E &  \stackrel{{p}}{\longrightarrow} & X 
\end{array} 
\quad\text{and}\quad 
\begin{array}{ccc} \hE &  \stackrel{{\hs}}{\longrightarrow} & Y\\
\, \, \, \, \downarrow^{\hat \rho}   & 
& \, \, \, \,\,\, \downarrow^{\rho|_Y}\\
E &  \stackrel{{s}}{\longrightarrow} & X 
\end{array} 
\end{equation}

Furthermore, suppose there are $\mu_2$-actions on $X,E$ and $F$
(and, therefore, on $Y$)  such that
$p$, $s$ and  $\rho : F\to X$ are $\mu_2$-equivariant. Then
there is a $\mu_2$-action on $\hE$ such that 
$\hp$ and $\hs$ are $\mu_2$-equivariant.
\elem
\bproof   
Consider the line bundles $F_1=p^*F\to E$ and $F_2=s^*F \to E$ 
induced from $\rho\colon F\to X$ via morphisms $p\colon E\to X$ 
and $s\colon E\to X$, respectively.  They fit in commutative diagrams
\begin{equation}\label{eq:2}
\begin{array}{ccc} F_1&   \stackrel{\hp}{\longrightarrow} & F\\
\,    \downarrow   & & \,   \downarrow{\rho}\\
E &  \stackrel{{p}}{\longrightarrow} & X 
\end{array} 
\quad\text{and}\quad 
\begin{array}{ccc} F_2 &   \stackrel{\hs}{\longrightarrow} & F\\
\, \downarrow   & &   \downarrow{\rho}\\
E &  \stackrel{{s}}{\longrightarrow}  & X 
\end{array} 
\end{equation}
Since  $s|_Z=p|_Z$ we have $F_1|_Z=F_2|_Z\cong F$ 
under a natural identification 
of $Z$ and $X$. By Lemma \ref{spr.l1} 
there is an isomorphism
$\varphi \colon F_1\stackrel{\cong}{\longrightarrow} F_2$ 
of the line bundles 
$\hat\rho_1\colon F_1\to E$ and $\hat\rho_2\colon F_2\to E$. 
 Letting $Z_i$ be the zero section of $F_i$ and 
$\hat Y_i=F_i\setminus Z_i$ 
we get two isomorphic fiber bundles $\hat Y_i\to E$, $i=1,2$ 
with general fiber 
$\A^1_*=\A^1\setminus\{0\}$. Letting $\hat E=\hat Y_1$ 
and composing 
the isomorphism $\varphi|_\hE \colon 
\hat E\stackrel{\cong}{\longrightarrow}\hat Y_2$ 
with morphisms $\hat Y_2\to Y$ 
and $\hat Y_2\to E$ from the second diagram in \eqref{eq:2}  
yields \eqref{eq:1}. 
The vector bundle $\hp\colon \hE\to Y$ 
in the first diagram in \eqref{eq:1}  is  
induced from $p\colon E\to X$ via 
the morphism $\rho|_Y\colon Y\to X$. 
Let $\hat Z=\hat\rho^{-1}(Z)$ 
be the zero section  of $\hp\colon \hE\to Y$.   
Since  $s|_Z=p|_Z$  we have $\hat s|_\hZ=\hat p|_\hZ$. So    
$(\hE,\hp,\hs)$ is a spray on $Y$.

For the second statement recall that the isomorphism
$\varphi \colon F_1\stackrel{\cong}{\longrightarrow} F_2$  
of Lemma \ref{spr.l1}
 is $\mu_2$-equivariant.
Hence, also the above isomorphism 
$\varphi|_{\hE}: \hE\stackrel{\cong}{\longrightarrow} \hY_2$ 
 is $\mu_2$-equivariant.
The induced morphisms
$\hp|_{F_1}\colon F_1\to F$ and 
$\hs \colon F_2\to F$ in \eqref{eq:2}  
are $\mu_2$-equivariant too since 
so are $p$, $s$ and  $\rho$  
by our assumption.
\eproof
The next lemma guarantees the existence 
of pushforward sprays on quotients by $\mu_2$-actions.
\blem\label{spr.new.c} 
Under the assumptions of Lemma \ref{spr.p1} 
suppose in addition that the 
$\mu_2$-action on $X$ is free.
Letting 
\[X'=X/\mu_2,\quad Y'=Y/\mu_2,\quad 
E'=E/\mu_2\quad\text{and}\quad \hE'=\hE/\mu_2\]
consider the induced morphisms 
\[\rho' \colon Y'\to X',\quad  
p' \colon E'\to X',\quad
 s'\colon E'\to X', \quad \hp' \colon\hE'\to Y'
 \quad\text{and}\quad \hs '\colon \hE'\to Y'.\]
Then  $\rho'\colon Y'\to X'$ is a smooth $\A^1_*$-fibration.
Suppose further that
$p' \colon E'\to Y'$ and $\hp' \colon \hE'\to Y'$
are vector bundles \footnote{A priori, these 
are vector bundles locally trivial in \'etale topology.}.
Then $(E',p',s')$ is a spray on $X'$ and 
$(\hE',\hp',\hs')$ is a spray on $Y'$ 
such that $\rho' \circ \hs'=s'\circ \hp'$.
\elem
The proof is immediate and we leave it to the reader. 

Next we show that a spray admits pullback to 
an unramified $\mu_2$-covering.
\blem\label{z2} Let $(E,p,s)$ be a spray on $X$ 
and $\tau : \tX \to X$
be an unramified Galois covering 
with Galois group $\mu_2$. 
Let also $\tp\colon \tE\to \tX$ be 
the induced vector bundle on $\tX$.
Then there are a $\mu_2$-action on $\tE$ 
and a spray $(\tE,\tp,\ts)$ on $\tX$
such that both $\tp$ and $\ts$ are 
$\mu_2$-equivariant and
$s\circ \tilde\tau=\tau\circ \ts$ where 
$\tilde\tau\colon \tE\to E=\tE/\mu_2$ 
is the quotient morphism.
\elem
\bproof  The $\mu_2$-action 
on $\tX$ generates a $\mu_2$-action on
$\tX \times E$ trivial on the second factor. 
Recall that 
\[\tE=\tX\times_XE=\{ (\tx, e) \in \tX \times E 
\,|\, \tau (\tx) =p(e)\}.\]
Clearly, the natural projection
 $\tp\colon \tE \to \tX$ is $\mu_2$-equivariant.   
 Let $s'=s\circ \tilde \tau : \tE \to X$ and
 \[E'=\tX\times_X\tE=\{ (\tx,\te)\in 
 \tX\times \tE\,|\, \tau (\tx)=s'(\te)\}.\] 
Since $\tau (\tx) =\tau (\tx')$ if and only 
if $\tx'=\lambda. \tx$ for some $\lambda \in \mu_2$,
while $s'$ is constant on the $\mu_2$-orbits in $\tE$  
we see that
$E'$ is an unramified double cover of $\tE$. 
 Letting $\tZ$ be the zero section of $\tE$ 
 consider an isomorphism 
 $\varphi =\tp|_{\tZ} \colon \tZ
 \stackrel{\simeq}{\longrightarrow} \tX$. 
Notice that the preimage $Z'$ of
$\tZ$ in $E'$ consists of two irreducible components 
\[Z'_1=\{ (\varphi (z),z)| \, z \in \tZ\}\quad\text{and}\quad
Z'_2=\{ (\lambda .\varphi (z),z)| \, z \in \tZ\}\] 
where $\lambda\in\mu_2$ is an element of order $2$.
Notice also that  $\tp \colon \tE \to \tX \simeq\tZ$ 
induces a surjective morphism $E'\to Z'$.
Hence, $E'$ consists of two irreducible components 
$E'_i$, $i=1,2$ where $E'_i$ contains $Z'_i$. 
Taking the component $E'_1$
which contains $Z'_1$ as the graph of $\ts$ 
we get a lift $\ts \colon \tE \to \tX$. 
A natural lift to $E'$ of the $\mu_2$-action interchanges 
$E_1'$ and $E_2'$.
By construction, $\tau \circ \ts =s\circ \tilde \tau$.
In particular, 
\[\tau \circ \ts (\lambda. \te)= s\circ \tilde \tau (\lambda . \te)
= s (e)\quad\text{where}\quad
e =\tilde \tau (\te)=\tilde\tau (\lambda . \te).\] 
Thus, $\tau \circ \ts (\lambda. \te)=s(e)=\tau\circ \ts (\te)$. That is,
$\ts (\lambda. \te)$ coincides with either 
$\lambda . \ts (\te)$ or $\ts (\te)$. Since for
$\te \in \tZ\simeq Z'_1$ we have the former,
by continuity
$\ts (\lambda. \te)=\lambda . \ts (\te)$ for each 
$\te \in \tE\simeq E'_1$. This concludes the proof.
\eproof

\subsection{Sprays on untwisted $\A^1_*$-fibrations}\label{ss:cones}
Let $\rho\colon Y\to X$ be a locally trivial $\A^1_*$-fibration. 
Our aim is to give a criterion as to when the total 
space $Y$ is elliptic. 
Proposition \ref{prop:Gro} contains a necessary condition 
for this, whereas 
Proposition \ref{spr.c1} and Corollary \ref{spr.t1} 
provide sufficient conditions. 
\bprop\label{spr.c1} Given a locally trivial $\A^1_*$-fibration 
$\rho\colon Y\to X$
over an elliptic variety $X$ let $(E,p,s)$ 
be a spray on $X$ and 
$(\hE,\hp,\hs)$ be the induces spray on $Y$, 
see Lemma \ref{spr.p1}. 
Given points $x\in X$ 
and $y \in \rho^{-1} (x)$ 
let  
$(\hE_i,\hp_i,\hs_i)$, $i=1,2$  be rank $1$ sprays on $Y$ 
such that the images 
$O_i=\rho (\hO_i)$ in $X$ of the $\hs_i$-orbits
$\hO_i$ of $y$ are smooth curves. Suppose that
\begin{itemize}  
\item[(i)]  $(E,p,s)$ is dominating at $x$, and
\item[(ii)]
 $T_y\hO_1\neq T_y\hO_2$, while $T_x O_1=T_x O_2$,  that is,
 $O_1$ and $O_2$ are tangent at $x$.
 \end{itemize} 
 Then the family of sprays  $\{(\hE,\hp,\hs),\, (\hE_1,\hp_1,\hs_1),\, 
(\hE_2,\hp_2,\hs_2)\}$ 
 is dominating at $y\in Y$.
\eprop
\bproof If $\hO_y\subset\hE$ is the $\hs$-orbit of $y$
then $\rho (\hO_y)$ coincides with the $s$-orbit $O_x$, 
see the second diagram in \eqref{eq:1}. 
By (i) $(E,p,s)$ is dominating at $x$, hence 
$d\rho (T_y \hO_y)=T_xX$ and so, 
$\codim_{T_y Y} T_y \hO_y=1$. 
On the other hand,  (ii) implies that
$T_y \hO_1+T_y \hO_2\subset T_y Y$
 contains a nonzero vector $v$ such that $d\rho(v)=0$, that is, 
 $v\not\in T_y \hO_y$. 
Thus, $T_yY={\rm span}(T_y \hO_y,T_y \hO_1,\,T_y \hO_2)$, 
as needed. 
\eproof
The following notion is inspired by Proposition \ref{spr.c1}.
\bdefi\label{def:co}
Let $X$ be a smooth algebraic variety, $x\in X$ and let 
$C_x\simeq\PP^1$ be 
a smooth rational curve on $X$ passing through $x$. 
We say that the pair $(x, C_x)$ verifies the \emph{two-orbit property} 
if 
\begin{itemize}
\item[$(*)$] $C_x$ is union of two orbits $O_1$ and 
$O_2$ of rank 1 sprays $(E_1,p_1,s_1)$ 
and $(E_2,p_2,s_2)$ on $X$, respectively, where 
 $O_i\cong\A^1$ passes through $x$
 and the restriction 
 of $(E_i,p_i,s_i)$ on  $O_i$ is dominating at $x$.
\end{itemize}
We say that $X$ \emph{verifies the curve-orbit property}
if through each point $x \in X$ passes a smooth rational curve 
$C_x$ on $X$ satisfying $(*)$. If $C_x$ can be chosen from 
a specific family $\cF$ 
of rational smooth projective curves on $X$ then we say that $X$ 
\emph{verifies the curve-orbit property with respect to} $\cF$.
\edefi
\begin{rem} Notice 
that any one-dimensional orbit $O_x$ of a $\G_{\mathrm a}$-spray 
is  isomorphic to $\A^1$ and the restriction 
of this spray to $O_x$ is dominating at $x$. 
In the sequel we mostly deal with $\G_{\mathrm a}$-sprays.
\end{rem}
\bcor\label{spr.t1} Let $X$ be a smooth
variety, $\rho : F\to X$ be a 
line bundle and $Y=F\setminus Z_F$. Suppose that
\begin{itemize}
\item[{\rm (i)}] $X$ is elliptic and verifies the curve-orbit 
property with respect to 
a covering family  of smooth projective  rational curves 
$\{C_x\}$ on $X$, and
\item[{\rm (ii)}] $\rho\colon F\to X$ restricts to a nontrivial line bundle 
on each member $C_x$ of the family.
\end{itemize}
Then $Y$ is elliptic.
\ecor
\bproof Fix a dominating spray $(E,p,s)$ on $X$. 
Let $(\hE_i,\hp_i,\hs_i)$, $i=1,2$ and $(\hE,\hp,\hs)$ 
be the sprays on $Y$
induced by the $(E_i,p_i,s_i)$ and $(E,p,s)$, respectively, 
see Lemma \ref{spr.p1}. 
By Proposition \ref{spr.c1} it suffices to verify conditions (i) and (ii) 
of this proposition at any point $y\in\rho^{-1}(x)\cap Y$. 
Indeed, then the latter triple
of sprays is dominating at $y$. The family of all such triples for $x\in X$ 
is dominating on $Y$, 
which proves that $Y$ is subelliptic, hence elliptic by 
\cite[Theorem 0.1]{KZ23}. 

Since  the spray $(E,p,s)$ is dominating, condition (i) 
of Proposition \ref{spr.c1} holds. 
To verify (ii) of Proposition \ref{spr.c1}
we have to show that for each $y \in \rho^{-1} (x)\cap Y$ the 
$\hs_i$-orbits $\hO_1$ 
and $\hO_2$ of $y$
are transversal at $y$. By assumption (i) the $s_i$-orbits 
$O_i$, $i=1,2$ 
provide a trivializing cover  for the line bundle 
$\rho|_{C_x}\colon F|_{C_x}\to C_x$. 
Since $\rho|_{\hO_i}\colon \hO_i\to O_i$ is an isomorphism, 
$\hO_i$ is a 
non-vanishing local section of $F|_{C_x}$ over $O_i$. 
Thus, $\hO_i$ is a constant section 
given by $v_i=1$
in appropriate coordinates 
$(u_i,v_i)$ in $F|_{O_i}\simeq\A^1\times\A^1_*$.

We may consider that $x=\{u_i=1\}$ in $O_i$, $i=1,2$. 
Let $z=u_1/u_2$ be an affine coordinate in 
$\omega=O_1\cap O_2\simeq\A^1_*$.  
Due to (ii) the line bundle $\rho|_{C_x}\colon F|_{C_x}\to C_x$ 
is nontrivial. So, its transition function on $\omega$ equals 
$v_2/v_1=z^k$  for some $k\neq 0$. 
Therefore, in coordinates $(u_1,v_1)$ in 
$Y|_{O_1}\simeq \A^1\times\A^1_*$ the curve 
$\hO_2$ is given by equation $v_1= u_1^{-k}$. Hence, $\hO_2$ 
is transversal to $\hO_1$ at the point $y=(1,1)\in F|_{O_1}$. 
\eproof
\subsection{Ellipticity of twisted $\A^1_*$-fibrations}\label{ss:twists}
Let $X$ be a smooth complete variety. 
In this subsection we provide an analog of Corollary \ref{spr.t1} 
for any, not necessarily locally trivial smooth fibration 
$\rho\colon Y\to X$ 
with all fibers 
isomorphic to $\A^1_*=\A^1\setminus\{0\}$. 
Such a fibration extends to a locally trivial fibration
$\bar\rho\colon\bY\to X$ with general fiber $\PP^1$ 
and
with a divisor $\tX=\bY\setminus Y$ being smooth and
either irreducible or a union of two disjoint sections $Z_0$ 
and $Z_\infty$ of $\bar\rho$. The $\A^1_*$-fibration 
$\rho\colon Y\to X$ 
is said to be \emph{twisted}
in the former case and \emph{untwisted} in the latter case.

Letting in the untwisted case
$F=\bY\setminus Z_\infty$ we get 
a locally trivial line bundle $\bar\rho|_F\colon F\to X$ 
with zero section $Z_0$ so that $Y=F\setminus Z_0$. 
This returns us to the setup of Corollary \ref{spr.t1}. 

To extend Corollary \ref{spr.t1}  
in the twisted setup, we deal with $\mu_2$-varieties 
and $\mu_2$-vector bundles.
Indeed, a twisted $\A^1_*$-fibration $\rho\colon Y\to X$ 
can be untwisted as follows.
Consider the unramified Galois covering 
$\tilde\rho=\bar\rho|_{\tX}\colon \tX\to X$ 
with Galois group $\mu_2$ so that $X=\tX/\mu_2$.
The induced
$\PP^1$-fiber bundle $\tilde\rho^*\bY\to \tX$ over $\tX$
admits two disjoint sections $Z_0$ and $Z_\infty$ 
where $Z_0$ is the tautological section. Letting 
$\tF=\tilde\rho^*\bY\setminus Z_\infty$ we get a line bundle
$\tF\to \tX$. 
Hence, the induced $\A^1_*$-fibration 
$\tY=\tilde\rho^*\bY\setminus (Z_0\cup Z_\infty)\to \tX$ 
is untwisted. Furthermore, $\tY$ carries a free $\mu_2$-action 
making the projection 
$\tY\to \tX$ equivariant and such that $Y=\tY/\mu_2$. 
We can  now deduce the following analog of Corollary 
\ref{spr.t1} for twisted $\A^1_*$-fibrations.
\bprop\label{spr.t1+} Let $X$ be a smooth 
variety and $\rho\colon Y\to X$ be a smooth 
$\A^1_*$-fibration. 
Suppose that 
\begin{itemize}
\item[{\rm (i)}] $X$ is elliptic and verifies 
the curve-orbit property with respect to 
a covering family  of smooth projective  
rational curves $\{C_x\}$ on $X$;
\item[{\rm (ii)}] for any member $C_x$ 
of the family the restriction 
$\rho|_{\rho^{-1}(C_x)}\colon \rho^{-1}(C_x)\to C_x$ 
is a nontrivial $\A^1_*$-fiber bundle. 
\footnote{Condition (ii) 
is equivalent to the following one:
\begin{itemize}
\item[{\rm (ii$'$)}]
$\tF\to \tX$ restricts to a nontrivial line bundle 
on each of the two components of the curve 
$\tC_x=\tilde\rho^{-1}(C_x)$ on $\tX$.
\end{itemize}}
\end{itemize}
Then $Y$ is elliptic.
\eprop
\begin{figure}[ht]
\begin{tikzpicture}
\node at (-3.2,4) {$\hE'$};
\draw[->](-3.2,3.7)--node[right=1pt]{$\hat\rho'$} (-3.2,2.8);
\draw[->](-3,3.9)--node[above=3pt]{$/\mu_2$} (-1.6,3.9);
\draw[->](-3.4,3.9)--node[above=3pt]{$\hp'$} (-4.8,2.8);
\node at (-3.2,2.5) {$\tE$};
\draw[->](-3.4,2.4)--node[below=3pt]{$\tp$} (-4.8,1.3);
\draw[->](-3,2.5)--node[above=1pt]{$/\mu_2$} (-1.6,2.5);
\node at (-5,2.5) {$\tY$};
\draw[thick][->] plot [smooth,tension=0.3] coordinates{(-4.8,2.5) (-2.1,1.9)(0.2,2.5)};
\draw[->](-5,2.2)--node[left=1pt]{$\tilde\rho$} (-5,1.3);
\node at (-5,1) {$\tX$};
\draw[->](-4.8,1)--node[above=3pt]{$/\mu_2$} (-0.1,1);

\node at (-1.4,3.9) {$\hE$};
\draw[->](-1.4,3.7)--node[left=1pt]{$\hat\rho$} (-1.4,2.8);
\draw[->](-1.2,3.8)--node[above=3pt]{$\hp$} (0.2,2.7);
\node at (-1.4,2.4) {$E$};
\draw[->](-1.2,2.4)--node[below=3pt]{$p$} (0.2,1.3);
\node at (0.4,2.5) {$Y$};
\draw[->](0.4,2.2)--node[right=3pt]{$\rho$} (0.4,1.3);
\node at (0.4,1) {$X$};
\end{tikzpicture}

\begin{tikzpicture}
\node at (4.2,4) {$\hE'$};
\draw[->](4.2,3.7)--node[right=1pt]{$\hat\rho'$} (4.2,2.8);
\draw[->](4.4,3.9)--node[above=3pt]{$/\mu_2$} (5.8,3.9);
\draw[->](4,3.9)--node[above=3pt]{$\hs'$} (2.6,2.8);
\node at (4.2,2.5) {$\tE$};
\draw[->](4,2.4)--node[below=3pt]{$\ts$} (2.6,1.3);
\draw[->](4.4,2.5)--node[above=1pt]{$/\mu_2$} (5.8,2.5);
\node at (2.4,2.5) {$\tY$};
\draw[thick][->] plot [smooth,tension=0.3] coordinates{(2.6,2.5) (5.1,1.9)(7.6,2.5)};
\draw[->](2.4,2.2)--node[left=1pt]{$\tilde\rho$} (2.4,1.3);
\node at (2.4,1) {$\tX$};
\draw[->](2.6,1)--node[above=3pt]{$/\mu_2$} (7.3,1);

\node at (6,3.9) {$\hE$};
\draw[->](6,3.7)--node[left=1pt]{$\hat\rho$} (6,2.8);
\draw[->](6.2,3.8)--node[above=3pt]{$\hs$} (7.6,2.7);
\node at (6,2.4) {$E$};
\draw[->](6.2,2.4)--node[below=3pt]{$s$} (7.6,1.3);
\node at (7.8,2.5) {$Y$};
\draw[->](7.8,2.2)--node[right=3pt]{$\rho$} (7.8,1.3);
\node at (7.8,1) {$X$};
\end{tikzpicture}
\caption{}
\label {fig1}
\end{figure}
\bproof
Since $X$ is elliptic there exists 
a dominating spray $(E,p,s)$ on $X$.
Consider the natural $\mu_2$-action on $\tX$.
Lemma \ref{z2} guarantees the existence
of a pullback spray $(\tE,\tp,\ts)$ on $\tX$ such that 
the induced morphisms $\tp$ and $\ts$ 
are $\mu_2$-equivariant
with respect to the induced $\mu_2$-action on $\tE$.
By Lemma \ref{spr.p1} the latter spray admits 
a pullback to a $\mu_2$-equivariant 
spray $(\hE',\hp',\hs')$ on $\tY$, see Figure 1.  

The spray $(\hE',\hp',\hs')$ verifies the assumptions 
of Lemma \ref{spr.new.c}, 
with notation changed accordingly.
By this lemma, the  $\mu_2$-equivariant 
spray $(\hE',\hp',\hs')$ on $\tY$ induces a pushforward spray
$(\hE,\hp,\hs)$ on $Y$. Since the spray $(E,p,s)$ 
on $X$ is dominating, 
the resulting spray $(\hE,\hp,\hs)$ on $Y$ 
is dominating in codimension 1,
that is, for each $y \in Y$ 
and $v\in T_xX$ 
with $\rho(y)=x$ the $s$-orbit of $y$ 
is tangent to a vector $w \in T_yY$
with $d\rho(w)=v$. The curve-orbit property of $X$ together with
condition (ii) above guarantees
the existence of a rank 1 spray $(E_0,p_0,s_0)$ on $Y$
such that the $s_0$-orbit of $y$ is tangent 
to a nonzero vector from $\Ker (d\rho)$, 
see the proof of Proposition \ref{spr.c1}.
Now the conclusion follows by 
an argument from the latter proof.
\eproof
\subsection{Examples}\label{ss:examples}
\bexa\label{spr.exa1} Let $B$ be a smooth complete variety 
and $\pi \colon X \to B$ 
be a ruling, that is, 
a locally trivial fiber 
bundle over $B$ with general fiber $\PP^1$. Suppose that $X$ 
is elliptic and let $F\to X$ 
be a line bundle whose restriction to
a fiber of $\pi$ is nontrivial. 
Then the assumptions (i) and (ii) of Corollary \ref{spr.t1} 
hold with the family $\cF=
\{C_x\}_{x\in X}$ where $C_x=\pi^{-1}(\pi(x))$. 
According to this corollary $Y=F\setminus Z_F$ is elliptic.
\eexa
\bexa\label{spr.exa1+} The criterion of Corollary \ref{spr.t1} 
can be applied to a generalized 
affine cone $\hY$ over a smooth projective variety $X$ defined by 
an ample polarization $D$ on $X$, see the definition in the Introduction.
Indeed, blowing $\hat Y$ up at the vertex $v\in\hat Y$ yields 
the line bundle $F=\cO_X(-D)\to X$ so that $Y\setminus
\{v\}=F\setminus Z_F$. For instance, if 
$D$ is a hyperplane section of $X\subset \PP^n$
then $F=\cO_X(-1)$. Since $D$ is ample the condition (ii) of 
Corollary \ref{spr.t1} holds for any curve $C$ on $X$. 
Thus, we arrive at the following criterion. 
\eexa
\bcor\label{spr.cor2} 
Let a smooth projective variety $X$  be elliptic and
verify the curve-orbit property $(*)$, 
see Definition \ref{def:co}. Let $\hat Y\to X$ 
be a generalized affine cone  over $X$ with smooth locus 
$Y=\hat Y\setminus\{v\}$. 
Then $Y$ is elliptic.
\ecor
\bexa\label{spr.exa2} 1. The curve-orbit property holds for 
$\PP^1=\SL(2,\kk)/B_+$, where 
$B^\pm$ is a Borel subgroup of $\SL(2,\kk)$ consisting of 
upper/lower triangular matrices. Indeed, the unipotent radical 
$U^\pm$ of  $B^\pm$ acts on $\PP^1$ with a unique fixed point 
$x^\pm$ where 
$x^+\neq x^-$. By homogeneity, for any $x\in \PP^1$ 
we can choose a new pair of opposite Borel subgroups $\tB^\pm$ 
so that $x\notin\{\tx^+,\tx^-\}$ and so, the orbits $\tU^\pm x$ 
are one-dimensional and cover $\PP^1$. 

2. Any generalized flag variety $G/P$ with a reductive group 
$G$ over $\kk$ verifies the curve-orbit property. Indeed, 
by homogeneity of $G/P$ 
and due to
 the previous example it suffices to find an $\SL(2,\kk)$- 
 or $\PGL(2,\kk)$-subgroup 
 $S$ of $G$ and a point $x\in G/P$ such that the orbit $C=Sx$ 
 is isomorphic to $\PP^1$. In fact, we are in the setup of 
 the previous example because the isotropy subgroup of 
 a point $x\in C$ in $S$ is a Borel subgroup $B_S=S\cap P$ 
 of $S$ and
 the fixed 
 point $x^-$ of the opposite Borel subgroup $B_S^-$ 
 is different from $x$. 
 
Fix a a Cartan subalgebra $\fh$ 
of $\lie(P)\subset\lie (G)$, let 
$\Delta\subset\fh^\vee$ be the associated root system of $G$, $ \Delta^\pm$
be the subset of positive resp. negative roots in $\Delta$ and 
$\Delta_P\subset\Delta$ be the root system of $P$. We assume that 
$\Delta_P^+=\Delta^+$.
Consider the root decomposition 
\begin{equation}\label{eq:Cartan} \lie(P)=\fh\oplus 
 \bigoplus_{\alpha_i\in \Delta^+} 
 \fg_{\alpha_i} \oplus \bigoplus_{\alpha_i\in 
 \Delta_P^-} \fg_{\alpha_i} 
 \end{equation}
 where $\fg_{\alpha}$ is the root subspace of $\lie(G)$ 
which corresponds to a root $\alpha\in\Delta$. 

 Pick a negative root $\alpha\in\Delta^-\setminus \Delta_P^-$ 
 and let $S$ be the subgroup 
of $G$ with Lie algebra 
\[\lie(S)=\fg_{\alpha}\oplus 
\fg_{-\alpha}\oplus \fh_{\alpha}\quad\text{where}\quad  
\fh_{\alpha}=[\fg_{\alpha}, \fg_{-\alpha}]\subset\fh.\]
Thus, $S$ is isomorphic to either $\SL(2,\kk)$ or ${\rm PGL}(2,\kk)$ 
and $S\cap P=B_S$ is the Borel subgroup of $S$ with Lie algebra 
\[\lie(B_S)=\fg_{-\alpha}\oplus \fh_{\alpha}\,.\]
Since $B_S\subset B_P$, the $B_S$-action on $G/P$ 
fixes the distinguished point $x_0=eP\in G/P$. 
Let $U^-$ be the one-parameter unipotent subgroup of 
$S$ with Lie algebra $\fg_{\alpha}$.
Assuming that $U^-$ also fixes $x_0$ we obtain 
$[\fg_{\alpha}, \lie(P)]\subset\lie(P)$; in particular, 
$\fg_{\alpha}=[\fg_{\alpha}, \fh_{\alpha}] \subset \lie(P)$. 
The latter contradicts \eqref{eq:Cartan}. 
Therefore, $x_0\in (G/P)^{B_S}\setminus  (G/P)^{S}$ 
and so, the $S$-orbit $C=Sx_0$
 is  isomorphic to $\PP^1$.
\eexa
Recall that a variety $X$ is said to be \emph{of class $\cA_0$} if $X$ is 
covered by open subsets isomorphic
to the affine space $\A^n$ where $n=\dim X$.  
Such a variety is subelliptic, see 
\cite[Proposition 6.4.5]{For17a}. 
Therefore, it is elliptic by \cite[Theorem 0.1]{KZ23}. 

A generalized flag variety $G/P$ 
where $G$ is a reductive algebraic group and $P\subset G$ 
is a parabolic subgroup belongs to class $\cA_0$, 
see \cite[Sec. 4(3)]{APS14}; cf. also \cite[3.5F]{Gro89}. 
Indeed, a maximal unipotent subgroup $U$ of $G$ acts on $G/P$ 
with an open orbit isomorphic to an affine space, 
see, e.g., \cite[Sec. 3.1, proof of Proposition 7]{Akh95}. 
Since $G$ acts on $G/P$ transitively, the assertion follows.
See also \cite[Sec. 4]{APS14} for further examples.
\bcor\label{spr.cor3} Let $X=G/P$ be a flag variety and
$\hY$ be a generalized affine cone 
 over $X$. Then the smooth locus $Y=\hY\setminus \{v\}$ 
 of $\hY$ is elliptic. 
\ecor
\bproof
By the previous discussion and Example \ref{spr.exa2} $X$ 
verifies the assumptions of Corollary \ref{spr.cor2}. 
So, the assertion follows due to this Corollary. 
\eproof
\brem\label{rem:AKZ} Let $\hY$ be the affine cone 
over a flag variety $G/P$ 
embedded in $\PP^n$ as a projectively normal subvariety. 
Then $\hY$ is normal and moreover, flexible, 
see \cite[Theorem 0.2]{AKZ12}. Hence, $Y=\bY\setminus\{v\}$ is elliptic.
\erem
\section{Cones over  blown-up varieties of class $\cA_0$}\label{sec:A0}
\subsection{The enhanced curve-orbit property} 
Blowing up a variety $X$ of class $\cA_0$ with center at a point results again 
in a variety of class $\cA_0$, see, e.g., \cite[Sect. 3.5.D$''$]{Gro89} or 
\cite[Proposition 6.4.6]{For17a}. In particular, any rational smooth 
projective surface is of class $\cA_0$, see \cite[Corollary 6.4.7]{For17a}. 
The next proposition  allows to apply Corollary \ref{spr.cor2} 
to varieties of class $\cA_0$ blown up at a point.
\bprop\label{con.p1} Let $X$ be a complete variety of class $\cA_0$ 
and of dimension $n\ge 3$. 
Suppose that $X$ verifies the following 
\emph{enhanced curve-orbit property}:
\begin{itemize}
\item[$(**)$]
for each  $x \in X$ and any finite subset $M$ in $X\setminus \{x\}$ 
one can find a curve 
$C_x \simeq \PP^1$ on $X$ which avoids $M$ 
and verifies the two-orbit property $(*)$ of Definition \ref{def:co} with pairs of $\G_a$-sprays 
$(E_i,p_i,s_i)$, $i=1,2$.
\end{itemize}
Let $\sigma \colon \tX \to X$ be the blowup of a point $x_0\in X$.
Then $\tX$ also verifies $(**)$. 
\eprop
\bproof  
Given a point $\tilde x \in \tX$ fix a finite subset 
$\tM\subset \tX\setminus \{\tilde x\}$. 
Assume first that $x:=\sigma (\tilde x) \ne x_0$.
Choose a curve $C_{x}\subset X$  and a pair of extended 
$\G_{\mathrm a}$-sprays $(E_i,p_i,s_i)$ on $X$
satisfying $(**)$ with $M=\{x_0\}\cup\sigma(\tM)\subset X$. 
Replace the $(E_i,p_i,s_i)$
by suitable replicas such that the $s_i$-orbits of $x_0$ 
become singletons, while the 
$s_i$-orbits $O_i$ of $x$ do not change. 
By Proposition  \ref{cr.l2} one
can lift these sprays to extended $\G_{\mathrm a}$-sprays 
$(\tE_i,\tp_i,\ts_i)$ 
on $\tX$, $i=1,2$
such that $\sigma(\tO_i)=O_i$ where $\tO_i$ is the $\ts_i$-orbit 
of the point $\tx$.  
It is easily seen that the curve 
$C_{\tilde x}=\sigma^{-1} (C_{x})$ and the lifted sprays 
$(\tE_i,\tp_i,\ts_i)$ 
satisfy $(**)$.

Suppose now that $\tx$ lies on the exceptional divisor 
$\cE=\sigma^{-1}(x_0)$. 
Choose a neighborhood $\Omega\simeq\A^{n}$ of 
$\tx$ in $\tX$. Identifying 
$\Omega$ with $\A^n=\Spec\kk[u_1, \ldots, u_n]$ 
we may suppose that 
$x$ is the origin of $\A^n$. Then $(u_1\colon\cdots\colon u_n)$ 
can serve 
as homogeneous 
coordinates in $\cE\simeq\PP^{n-1}$.
Notice that $\tX$ can be covered with $n$ open subsets 
$U_i\simeq \A^n$ such that
$U_i\cap\cE=\cE\setminus H_i\simeq A^{n-1}$ where $H_i$ 
is the coordinate hyperplane in 
$\cE=\PP^{n-1}$ given by $u_i=0$. 

Choose the affine coordinates 
$(u_1, \ldots, u_n)$ in $X=\A^n$ 
in such a way that $\tx=(u_1\colon \cdots\colon u_n)\in\PP^{n-1}$ 
has two nonzero coordinates, say $u_1$ and $u_2$, that is, 
$\tx\in U_1\cap U_2$. Since $n-1\ge 2$,
a general projective line $C_{\tilde x}\subset\cE=\PP^{n-1}$ 
through $\tx$ meets 
$H_1$ and $H_2$ in two distinct points 
and does not meet $\tM$. Furthermore, for $i=1,2$
the affine line $\bar O_i=C_{\tilde x}\setminus 
H_i\simeq\A^1$ in $U_i$ 
is an orbit of a 
$\G_{\mathrm a}$-action on $U_i\simeq\A^n$ and so, a $\bar s_i$-orbit 
of the associated extended $\G_{\mathrm a}$-spray 
$(\bar E_i, \bar p_i,\bar s_i)$ 
on $\tX$. One has $C_{\tilde x}=\bar O_1\cup \bar O_2$.  
Thus, $(**)$ holds for $\tX$. 
\eproof
\brem\label{rem:n=2} Assuming that  $\tM$ is empty
the same construction of two extended 
$\G_{\mathrm a}$-sprays on $\tX$ with 
$C_{\tilde x}=O_1\cup O_2$ satisfying the 
two-orbit property  
goes through for $n=2$, while this time $\cE\simeq\PP^1$. 
\erem
\subsection{Enhanced curve-orbit property 
for  flag varieties}
\bprop\label{prop:enhanced} 
A generalized flag variety $X=G/P$ of dimension $n\ge 2$ 
verifies the enhanced curve-orbit property $(**)$.
\eprop
\bproof
The stabilizer of the distinguished 
point $x_0=eP\in G/P$ 
coincides with the parabolic 
subgroup $P$.  The construction of 
Example \ref{spr.exa2} produces a subgroup $S$ of $G$ 
isomorphic either to $\SL(2,\kk)$ or to $\PGL(2,\kk)$, 
or in other terms, a subgroup locally isomorphic 
to $\SL(2,\kk)$,
along with 
a pair of opposite one-parameter unipotent subgroups 
 $U^\pm$ of $S$ and
an $S$-orbit $C=S/B^+\simeq\PP^1$ 
passing through $x_0$
 such that the orbits $U^\pm x_0$ are one-dimensional and cover $C$,
 where $B^+=S\cap P$ 
 is a Borel subgroup of $S$. 
 
The conjugation of $S$ by elements $a\in P$ 
yields a family $S_a=aSa^{-1}$
of $\SL(2,\kk)$- resp. $\PGL(2,\kk)$-subgroups of $G$ 
along with a family of their orbits $C_a=aC\cong\PP^1$ 
passing through $x_0$. We claim that
the family of $\PP^1$-curves 
$\cF=\{C_a\}_{a\in P}$ has no base point 
 different from $x_0$. 
 Assuming that $\cF$ contains at least two distinct curves
 the base point set 
 ${\rm Bs}(\cF)$ is finite. 
 Since the parabolic subgroup $P$ is connected
 each $x\in {\rm Bs}(\cF)$ 
 is fixed by $P$.  Since $P$ coincides with 
 its normalizer $N_G(P)$, 
 see \cite[Theorem X.29.4(c)]{Hum95}, $x_0=eP$ 
 is the unique fixed point of $P$. 
 Indeed, if $PgP=gP$ for some $g\in G$, 
 then $g\in N_G(P)=P$ and so, $gP=P$. 

Let us show that ${\rm card}(\cF)\ge 2$. 
In the notation of \eqref{eq:Cartan} we have 
\[\dim(G/P)=\card(\Delta^-\setminus \Delta_P^-)\ge 2.\] 
Therefore, $\Delta^-\setminus\Delta^-_P$ contains 
two distinct negative roots
$\alpha_i$, $i=1,2$. 
Let $U^+_i$ be the 
root subgroup of $P$ with 
$\lie(U^+_i)=\fg_{-\alpha_i}$, $S_i$ be the subgroup 
in $G$ with
\[\lie(S_i)=\lie(\fg_{\alpha_i}, \fg_{-\alpha_i})\cong \sl(2,\kk)\] 
and 
$B_i=S_i\cap P\supset U^+_i$ 
be a Borel subgroup of $S_i$, 
cf. Example \ref{spr.exa2}. The $S_i$-orbit of the 
distinguished point $x_0$ is 
a $\PP^1$-curve $C_i=S_i/B_i$ on $G/P$. 
Notice that $U_1^+$ and $U_2^+$ are two distinct subgroups 
of the unipotent radical $R_{\mathrm u}(P)$. Let $P^-$ 
be the opposite
parabolic subgroup of $G$ and $B^-_i$ 
be the opposite Borel subgroup of $S_i$
with unipotent radical $U^-_i$. Then $U_1^-$ and $U_2^-$ 
are two distinct subgroups 
of $R_{\mathrm u}(P^-)$. The $R_{\mathrm u}(P^-)$-orbit of $x_0$ 
is open in $G/P$
and the isotropy subgroup of $x_0$ in $R_{\mathrm u}(P^-)$ 
is trivial. Hence the orbits 
$U^-_ix_0$, $i=1,2$ are distinct and so are the curves 
$C_i=\overline{U^-_ix_0}$, $i=1,2$, as claimed. 
 
 Thus, for any finite subset $M=\{x_1,\ldots,x_m\}
 \subset G/P\setminus\{x_0\}$ and for every $i=1,\ldots,m$
one can choose a curve $C_{a_i}=a_iC$ from our family
which does not meet $x_i$. The set of elements $a_i\in P$ 
with this property is open and dense in $P$.
Therefore, for a general $a\in P$ the curve $C_a$ 
does not intersect $M$.
Since $G/P$ is homogeneous the choice of a point 
$x_0$ is irrelevant. 
It follows that $G/P$ verifies condition $(**)$, 
see Example \ref{spr.exa2}. 
\eproof

\subsection{The main result in dimensions $\ge 3$}
\bthm\label{con.t1}  Let $X$ be a smooth complete variety and
$Y$ be the smooth locus of a generalized affine cone over $X$, see 
the Introduction.
Then $Y$ is elliptic in each of the following cases:
\begin{enumerate} 
\item[{\rm (a)}] $n=\dim X\ge 3$ and $X$ 
is obtained from a variety $X_0$ 
of class $\cA_0$ verifying condition $(**)$ of Proposition \ref{con.p1} 
(say, $X_0=G/P$ is a flag variety of dimension $n\geq 3$, 
see Proposition \ref{prop:enhanced}) 
via a finite number of subsequent blowups; 
 \item[{\rm (b)}] $X$ is obtained  by blowing up a finite subset of 
 a Hirzebruch surface $\F_n$, $n\ge 0$ or of $\PP^2$ (say, $X$ 
 is a del Pezzo surface). 
\end{enumerate}
\ethm
\bproof  Notice that in (a) and (b) $X$ is of class $\cA_0$, hence elliptic.
Thus, it suffices to check
conditions (i) and (ii) of Corollary \ref{spr.t1}. In all cases condition (ii) 
holds because of Corollary \ref{spr.cor2}.
Condition (i) 
is provided by Propositions \ref{con.p1} and \ref{prop:enhanced}, 
Remark \ref{rem:n=2} and 
Example \ref{spr.exa1}.
\eproof
\brem\label{rem:rational-surfaces} The proof of 
Theorem \ref{con.t1}(a) does not work for $n=2$. 
Indeed, in this case the exceptional curve $\cE=\PP^1$ in the proof 
of Proposition \ref{con.p1} 
can contain points of $\tM$. However, its conclusion still holds for $n=2$; 
see Theorem \ref{con.t2} in the next section.
\erem
\section{Elliptic cones over rational surfaces}\label{sec:surf}
In this section we extend Theorem \ref{con.t1}
to any rational smooth projective  surface $X$. 
\bthm\label{con.t2}  Let $X$ be a rational 
smooth projective surface and
$Y=\hY\setminus\{v\}$ be the smooth locus of 
a generalized cone $\hY$ over $X$. 
Then $Y$ is elliptic.
\ethm
Due to Corollary \ref{spr.cor2}, Theorem \ref{con.t2} follows 
from the next proposition. 
\begin{prop}\label{gcon.p1}
$X$ verifies the curve-orbit property $(*)$.
\end{prop}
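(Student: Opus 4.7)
The plan is to prove $(*)$ for any rational smooth projective surface $X$ by induction on the minimum number $N(X) \ge 0$ of blowups in a decomposition of $X$ as a succession of blowups from a minimal rational surface $X_0 \in \{\PP^2, \F_n : n \ne 1\}$. To close the induction I prove simultaneously two enhanced statements: \emph{(a)} for each $x \in X$ and each $x_0 \in X \setminus \{x\}$, a $(*)$-curve through $x$ avoiding $x_0$ exists; and \emph{(b)} for each $x \in X$ and each tangent direction $\tau \in \PP(T_x X)$, a $(*)$-curve through $x$ tangent to $\tau$ at $x$ exists. Statement (b) is needed to control the inductive construction when both points involved lie on the exceptional curve.

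The base case is verified directly. On $X_0 = \PP^2$, the pencil of projective lines through $x$ realizes every tangent direction at $x$ and a generic member avoids $x_0$; on any such line, after a change of $\PGL(3,\kk)$-coordinates placing $\ell = \{z_0 = 0\}$ with $x \notin \{[0{:}1{:}0], [0{:}0{:}1]\}$, the two $\G_{\mathrm a}$-subgroups $[z_0{:}z_1{:}z_2] \mapsto [z_0{:}z_1{:}z_2 + tz_1]$ and $[z_0{:}z_1{:}z_2] \mapsto [z_0{:}z_1 + tz_2{:}z_2]$ preserve $\ell$ with the two excluded points as distinct fixed points, yielding the two-orbit decomposition as in Example \ref{spr.exa2}. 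On $X_0 = \F_n$, the fiber $F$ of the ruling $\pi\colon \F_n \to \PP^1$ through $x$ serves as $C_x$ for the fiber tangent direction and for $x_0 \notin F$, with two-orbit structure from fiberwise translation $\G_{\mathrm a}$-actions on two affine opens $\F_n \setminus \sigma_i$ for disjoint sections $\sigma_0, \sigma_\infty$ of $\pi$ avoiding $x$; the remaining tangent directions and the case $x_0 \in F$ are handled by sections of $\pi$ through $x$ in the positive-dimensional linear systems of sections of sufficiently high self-intersection, whose two-orbit property comes from $\G_{\mathrm a}$-subgroups of the section's stabilizer in $\Aut(\F_n)^0$ surjecting onto opposite unipotent subgroups of $\Aut(\PP^1) = \PGL(2,\kk)$ via the restriction map.

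For the inductive step, let $\sigma\colon \tX \to X$ be the blowup at $y \in X$ and assume (a) and (b) for $X$. If $\tx \in \cE := \sigma^{-1}(y)$ with the prescribed data satisfied by $\cE$ itself (that is, either $\tilde\tau = T_{\tx}\cE$ in case (b), or $\tx_0 \notin \cE$ in case (a)), take $C_{\tx} = \cE$ following Remark \ref{rem:n=2}: in coordinates $(u, v)$ on an $\A^2$-neighborhood of $y$ centered at $y$ and chosen so $\tx$ corresponds to a direction $[u_0{:}v_0]$ with $u_0 v_0 \ne 0$, the $\G_{\mathrm a}$-actions generated by $u\partial_v$ and $v\partial_u$ fix $y$, lift via Proposition \ref{cr.l2}, extend via Proposition \ref{lem:ext}, and give the required decomposition of $\cE$ through $\tx$. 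If $\tx \notin \cE$, set $x = \sigma(\tx) \ne y$, apply (a) or (b) on $X$ appropriately at $x$ (in case (a) avoiding $\{y, \sigma(\tx_0)\}$; in case (b) with tangent $d\sigma(\tilde\tau)$), take the proper transform $\tilde C_x \cong C_x$, and lift the witnessing sprays after replacing each LND by a replica obtained by multiplying by a function in its kernel vanishing at $y$ but not on the orbit through $x$ (which is possible since $y \notin C_x$); these replicas fix $y$ so Proposition \ref{cr.l2} applies. The remaining sub-cases ($\tx \in \cE$ with $\tx_0 \in \cE$ in (a), or $\tx \in \cE$ with $\tilde\tau$ transverse to $\cE$ in (b)) are handled by taking $C_{\tx}$ to be the proper transform $\tilde C$ of a $(*)$-curve $C \subset X$ through $y$ tangent to the direction $\tx \in \cE \cong \PP(T_y X)$ provided by (b) on $X$, with the $2$-jet of $C$ at $y$ further adjusted so that $T_{\tx}\tilde C$ equals the required $\tilde\tau$.

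The main obstacle is the last sub-case: since $y \in C$ is not a fixed point of either witnessing spray, these sprays cannot be lifted across the blowup, so we must construct new sprays on $\tX$ intrinsically. To do this, cover $\tilde C \cong \PP^1$ by two $\A^1$-pieces $A_i = \tilde C \setminus \{q_i\}$ with $q_1 \ne q_2$ and $\tx \in A_1 \cap A_2$, embed each $A_i$ as a closed $\A^1$-subvariety in an affine $\A^2$-chart $U_i \subset \tX$ chosen to avoid $q_i$ (possible because $\tX$ is of class $\cA_0$), and invoke the Abhyankar-Moh-Suzuki theorem: each closed embedding $A_i \hookrightarrow U_i \cong \A^2$ is equivalent to a coordinate line under a polynomial automorphism of $U_i$, so there is a $\G_{\mathrm a}$-action on $U_i$ whose orbit is $A_i$. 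Extension via Proposition \ref{lem:ext} then yields the two required extended $\G_{\mathrm a}$-sprays on $\tX$ whose orbits through $\tx$ decompose $\tilde C = C_{\tx}$. Combined with the case analysis above and the verification of (a) and (b) in the base cases, this closes the induction and establishes $(*)$ for every rational smooth projective surface.
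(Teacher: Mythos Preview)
Your inductive scheme does not close. There are two genuine gaps, both in the step where $\tx\in\cE$ and the chosen curve is the proper transform $\tilde C$ of a $(*)$-curve $C\subset X$ through the blown-up point $y$.

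First, the appeal to Abhyankar--Moh--Suzuki is unjustified. You assert that each $A_i=\tilde C\setminus\{q_i\}\cong\A^1$ sits as a closed curve in some open chart $U_i\cong\A^2$ of $\tX$, ``possible because $\tX$ is of class $\cA_0$''. Membership in $\cA_0$ only says that $\tX$ is \emph{covered} by $\A^2$-charts; it does not say that a prescribed affine $\A^1$-curve is contained in one of them. Already on $\PP^1\times\PP^1$ a smooth rational curve of bidegree $(1,d)$ with $d\ge 2$ meets the boundary of every standard $\A^2$-chart in at least two points, so no $A_i$ lands in such a chart; after several blowups the curves produced by your hypothesis (b) (sections of high self-intersection on $\F_n$ and their transforms) are exactly of this flavor. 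Without this embedding, AMS cannot be invoked and you have no mechanism to produce the two $\G_{\mathrm a}$-sprays on $\tX$ whose orbits decompose $\tilde C$. Second, and relatedly, your hypothesis (b) controls only the tangent direction (the $1$-jet) of $C$ at $y$, yet you ``adjust the $2$-jet'' to prescribe $T_{\tx}\tilde C$; nothing in the induction provides that freedom. There is also a smaller slip: when $\tx\notin\cE$ you apply (a) on $X$ ``avoiding $\{y,\sigma(\tx_0)\}$'', i.e.\ two points, while (a) as stated avoids only one; strengthening (a) to finite sets would reintroduce the very obstruction (Remark~\ref{rem:n=2}) you are trying to avoid when $\tx\in\cE$.

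The paper circumvents all of this by never leaving the class of \emph{fiber components} of a fixed $\PP^1$-fibration $\pi\colon X\to\PP^1$. Its inductive hypothesis (Proposition~\ref{cen.l2}, conditions (i)--(iv)) records, for each fiber component $C$ and each $x\in C$, a birational contraction $\nu_x\colon U_x\to\A^2$ taking $C\setminus\{x\}$ isomorphically onto a coordinate axis, with all exceptional curves of $\nu_x$ lying over points \emph{off} that axis. This is precisely the datum that manufactures the needed $\A^2$-chart and the $\G_{\mathrm a}$-action after each blowup (case (c$''$) there), replacing your unjustified AMS step by an explicit commutative square of blowups. If you want to repair your argument, the missing idea is to carry along this extra birational structure rather than relying on $\cA_0$ alone.
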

The proof of Proposition  \ref{gcon.p1} uses induction 
on the number of blowups when forming $X$. 
The following proposition provides the inductive step.
\bprop\label{cen.l2}
Let $X$ be a smooth projective surface and $\pi\colon X\to\PP^1$ 
be a fibration with general fiber $\PP^1$. 
Assume that for any fiber component $C$ of $\pi$ and any point $x\in C$ 
the following hold. 
\begin{enumerate}
\item[$(i)$] There exists an open subset $U_{x}$ in $X$ 
such that $X\setminus U_{x}$ has no isolated point and
$O_x:=U_x\cap C=C\setminus\{x\}\cong \A^1$; 
\item[$(ii)$] there exists a proper birational morphism 
$\nu_x\colon U_x\to\A^2$ such that
\begin{itemize}
\item 
$O_x$ is sent by $\nu_x$ isomorphically onto a coordinate axis 
$L$ in $\A^2$;
\item the exceptional divisor of $\nu_x$ consists 
of some fiber components of $\pi$;
\item no one of these components is contracted to a point of $L$;
\end{itemize}
\item[$(iii)$] there is a $\G_{\mathrm a}$-action $\lambda_x$ on $U_x$ 
such that $O_x$ is the $\lambda_x$-orbit of $x$;
\item[$(iv)$] for the extended $\G_{\mathrm a}$-spray $(E_x,p_x,s_x)$ 
on $X$ associated with $\lambda_x$, $O_x$ is the $s_x$-orbit 
of any point $x'\in O_x$. 
\end{enumerate}
Let $\sigma\colon \tX\to X$ be the blowup of a point $x_0\in X$ and 
$\tilde\pi=\pi\circ\sigma\colon
\tX\to \PP^1$ be the induced $\PP^1$-fibration. 
Then $(i)-(iv)$  hold with  $\pi\colon X\to\PP^1$ replaced by 
$\tilde\pi\colon \tX\to\PP^1$.
\eprop
\bproof
Let $\tC$ be a fiber component of $\tilde\pi$ on $\tX$ 
and $\tx\in \tC$. If $\tC$ is different from the exceptional curve 
of $\sigma$ then we let 
$C=\sigma(\tC)\subset X$ and $x=\sigma(\tx)\in C$; 
otherwise, we let $x=x_0$. 
Consider an open subset $U_{x}\subset X$ satisfying (i)-(iv). 

The following cases can occur:
\begin{itemize}
\item[(a)]  $\tC=\sigma^{-1}(x_0)$ is the exceptional curve 
of $\sigma$ and 
$x=x_0$; 
\item[(b)] $C$ is a fiber component  of $\pi$ and $x_0\notin C$;
\item[(c)] $C$ is a fiber component  of $\pi$ and $x_0\in C$.
\end{itemize}

In case (a) we choose an open neighborhood 
$U_0\cong\A^2$ of $x_0$ in $X$. 
Such a neighborhood does exist since $X$ belongs 
to class $\cA_0$. 
We also choose a coordinate system $(u,v)$ on 
$U_0$ such that $x_0$ is the origin 
and the axis $L=\{u=0\}$ goes in direction of $\tx\in \tC=\PP T_{x_0}U_0$. 
Then $\tilde U_0:=\sigma^{-1}(U_0)=\tU_1\cup \tU_2$ 
where the $\tU_i\simeq\A^2$ are
equipped with coordinates $(u_i, v_i)$ such that 
\begin{itemize}
\item[($\alpha$)] the restrictions  
$\sigma_0|_{\tU_i}\colon \tU_i\to U_0$ are given  by 
\[\sigma_0|_{\tU_1}\colon (u_1,v_1)
\mapsto (u_1v_1,v_1)\quad\text{and}\quad 
\sigma_0|_{\tU_2}\colon (u_2,v_2)\mapsto (u_2, u_2v_2);\] 
\item[($\beta$)] $\sigma_0$ sends the coordinate axis 
$L_1:=\{u_1=0\}$ in $\tU_1$ 
isomorphically onto $L$. 
\end{itemize}
Then $U_{\tilde x}:=\tU_1$ equipped with  the 
$\G_{\mathrm a}$-action 
$\lambda_{\tilde x}$ by translations 
in direction of $L_1$ satisfies (i)--(iv).

If (b) or (c) takes place then the preimage 
$U_{\tx}=\sigma^{-1}(U_{x})$ in $\tX$ satisfies
\[O_{\tx}:=U_{\tx}\cap \tC=\sigma^{-1}(C\setminus \{x\})=
\tC\setminus\{\tx\}.\] 
Clearly, $U_{\tx}$ also satisfies condition (i). 

Assume that case (b) takes place. 
It is easily seen that the composition 
$\nu_{\tx}=\nu_{x}\circ\sigma\colon U_{\tx}\to\A^2$ 
satisfies (ii).  Replacing $\lambda_{x}$ 
with an appropriate replica, if necessary, we may assume that 
$x_0\notin C$ is fixed by 
$\lambda_{x}$, while $O_{x}=C\cap U_{x}=C\setminus\{x\}$ 
is still an orbit of $\lambda_{x}$. 
Thus, $\lambda_{x}$ can be lifted to a $\G_{\mathrm a}$-action 
$\lambda_{\tx}$ on 
$U_{\tx}$ such that  $O_{\tx}$ is the $\lambda_{\tx}$-orbit of $\tx$ 
and so, (iii) holds for $\lambda_{\tx}$.
Letting
$(E_{\tx},p_{\tx},s_{\tx})$ be the extended 
$\G_{\mathrm a}$-spray on $\tX$ 
associated with 
$\lambda_{\tx}$ we obtain a data verifying (i)--(iv). 

Case (c) splits in two sub-cases: 
\begin{itemize}
\item[$(c')$] $x=x_0$;
\item[$(c'')$] $x\neq x_0$. 
\end{itemize}
In case (c$'$) we have $x_0=x\notin U_x$. Hence, letting 
$U_{\tx}=\sigma^{-1}(U_{x})$ the restriction
$\sigma|_{U_{\tx}}\colon U_{\tx}\to U_x$ is an isomorphism. 
Proceeding as before we come to the desired conclusion. 

In case ($c''$) we have $x_0\in C\setminus\{x\}=O_x$ and 
$\nu_x(x_0)\in \nu_x(O_x)=L$. We may suppose that 
$\nu_x(x_0)=\bar 0\in \A^2$ is the origin. 
By (ii), $\nu_x\colon U_x\to\A^2$
 is \'etale on $O_x\subset U_x$. 
Hence, $\nu_x$ is a blowup of $\A^2$ 
with center an ideal $I\subset \cO_{\A^2}(\A^2)$ supported on 
$A^2\setminus L$. 
Since $\nu_x$ sends $O_x$ isomorphically onto $L$,
the proper birational morphism 
$\nu_x\circ\sigma\colon U_{\tx}\to\A^2$ also 
sends $O_{\tx}$ isomorphically onto $L$. 
Notice that the blowups with disjoint centers commute. 
Hence,  $\nu_x\circ\sigma=\sigma_0\circ \nu_0$ 
where $\sigma_0\colon \tU_0\to\A^2$ is the blowup of $\A^2$ 
at the origin and $\nu_0$ is the blowup of $\tU_0$ 
with center 
$\tilde I=\sigma_0^*(I)\subset \cO_{\tU_0}(\tU_0)$.
These morphisms fit in a commutative diagram 
\[
\begin{array}{ccc} U_{\tx} &  \stackrel{{\sigma}}{\longrightarrow} & U_{x}\\
\, \, \, \, \downarrow^{\nu_0}   & & \, \, \, \,\,\, \downarrow^{\nu_x}\\
\tU_0 &  \stackrel{{\sigma_0}}{\longrightarrow} & \A^2
\end{array}
\] 

Choose affine coordinates $(u, v)$ 
on $\A^2$ such that $x_0=(0,0)$ is the origin and $L=\{u=0\}$. 
Then $\tU_0=\tU_1\cup \tU_2$ where $\tU_i\simeq\A^2$ is
equipped with coordinates $(u_i, v_i)$ so that  ($\alpha$) 
and ($\beta$) above hold. 
Morphism 
$\sigma_0$ sends a coordinate axis $L_1$ in $\tU_1$ 
isomorphically onto $L$ and $\nu_0$ sends
$O_{\tx}$  isomorphically onto $L_1$. 

Replace now $U_{\tx}$ by $U_{\tx, 1}:=\nu_0^{-1}(\tU_1)$ 
and $\nu$ by 
$\nu_1=\nu_0|_{U_{\tx, 1}}\colon U_{\tx, 1}\to \tU_1\simeq\A^2$.
We have $O_{\tx}=U_{\tx, 1}\cap \tC$ and 
$\nu_1|_{O_{\tx}}\colon O_{\tx}\to L_1$ 
is an isomorphism which sends $x_0\in O_{\tx}$ 
to the origin $\bar 0\in \tU_1=\A^2$. 
Furthermore, $\nu_1$ is \'etale on $O_{\tx}$ and so, 
$U_{\tx, 1}$ verifies (i) and (ii). 

Consider now the $\G_{\mathrm a}$-action 
$\lambda$ on $\tU_1=\A^2$ by translations 
in direction of $L_1$ so that $L_1$ is a $\lambda$-orbit. 
Replacing $\lambda$ by an appropriate replica 
we can achieve that $\lambda$ 
 acts freely on $L_1$ and leaves invariant the center 
 $\tilde I|_{U_{\tx, 1}}$ of the blowup $\nu_0|_{U_{\tx, 1}}$.
Then $\lambda$ admits a lift $\tilde\lambda$ 
to $U_{\tx, 1}$ such that $O_{\tx}$ is a $\tilde\lambda$-orbit. 
Therefore,  $U_{\tx, 1}$ satisfies (iii) and (iv).
\eproof
\bproof[Proof of Proposition \ref{gcon.p1}]  
Clearly, $\PP^2$ and the Hirzebruch surfaces $\F_n$ 
with $n\ge 0$
verify the curve-orbit property. 
 It is well known that any rational 
smooth projective surface $X$ non-isomorphic to 
$\PP^2$ results from a sequence of blowups starting 
with $\F_n$ 
for some $n\ge 0$.
Moreover, properties (i)--(iv) of Proposition  \ref{con.p1} hold for 
the $\PP^1$-fibration $\pi\colon \F_n\to\PP^1$. 
By this proposition and the induction on the number of blowups
these properties still hold for $X$. Let $x\in X$ 
lies on a fiber component 
$C$ of the induced $\PP^1$-fibration $\pi\colon X\to \PP^1$. 
Choose two distinct points $x_1,x_2\in C\setminus \{x\}$. 
Let $U_i$ and 
$(E_i,p_i,s_i)$ verify  (i)--(iv) of Proposition \ref{con.p1} 
with $x$ replaced 
by $x_i$, $i=1,2$. Then $O_i=C\setminus \{x_i\}$ 
is an $s_i$-orbit of $x$. 
The restriction of the extended $\G_{\mathrm a}$-spray $(E_i,p_i,s_i)$ 
to $O_i$ is dominating. 
Therefore, $X$ verifies the curve-orbit property. 
This proves Proposition \ref{gcon.p1}.
\eproof
\section{Appendix A: High transitivity of the 
$\End(X)$-action}\label{app:ht}
It is known that the automorphism group $\Aut(X)$ 
of a flexible variety $X$ 
acts highly transitively on the smooth locus 
$X_{\rm reg}$, that is, it acts $m$-transitively 
for any $m\ge 1$, 
see \cite[Theorem 0.1]{AFKKZ13}. 
A similar transitivity property holds 
for the monoid of endomorphisms $\End(X)$ provided $X$ 
is a smooth elliptic variety in the Gromov' sense.  
Abusing the language we say that $\End(X)$
 \emph{acts highly transitively on $X$} if for 
 any two isomorphic zero-dimensional subschemes 
 $Z_1$ and $Z_2$ of $X$ every isomorphism 
 $Z_1\stackrel{\simeq}{\longrightarrow} Z_2$ 
 extends to an endomorphism of $X$.
\begin{prop}\label{prop:ht} Let $X$ be 
a smooth quasiaffine variety. 
If $X$ is elliptic  
then $\End(X)$ acts highly transitively on $X$.
\end{prop}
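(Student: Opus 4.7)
The plan is to exploit the theorem of Kusakabe \cite{Kus22} (building on Forstneri\v c \cite{For17b}), which provides a surjective algebraic morphism $\varphi\colon \A^N \to X$ for some $N$, together with the quasiaffineness of $X$, to construct the sought endomorphisms in the form $f = \varphi \circ g \colon X \to X$ for a suitable morphism $g\colon X \to \A^N$. The idea is to arrange that the composite $\varphi \circ g$, when restricted to $Z_1$, reproduces the prescribed isomorphism $\psi$.

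First, I would lift the target subscheme $Z_2$ along $\varphi$, producing a closed immersion $\sigma\colon Z_2 \hookrightarrow \A^N$ with $\varphi \circ \sigma = \iota_{Z_2}$, the inclusion $Z_2 \hookrightarrow X$. Such a lift exists provided $\varphi$ is smooth at some chosen preimage of each point of $\supp(Z_2)$: formal smoothness then allows one to lift each infinitesimal piece of $Z_2$ to a finite subscheme of $\A^N$ by trivializing the smooth fiber coordinates. By generic smoothness of the dominant morphism $\varphi$, the smooth locus is open and dense in $\A^N$, and its image contains a dense open subset of $X$. By exploiting the freedom in Kusakabe's construction (which realizes $\varphi$ as the evaluation of an iterated composite spray along a chosen basepoint), one adjusts the basepoint and spray parameters so that the smooth locus of $\varphi$ eventually covers $\supp(Z_2)$.

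Second, I would extend the composite morphism $\tilde\psi := \sigma \circ \psi \colon Z_1 \to \A^N$ to a morphism $g \colon X \to \A^N$ with $g|_{Z_1} = \tilde\psi$. This is a routine consequence of quasiaffineness: choosing an embedding $X \hookrightarrow \A^M$, the restriction map $\cO(X) \twoheadrightarrow \cO(Z_1)$ is surjective for any finite subscheme $Z_1 \subset X$ (since polynomials on $\A^M$ can realize arbitrary jets on $Z_1$), so each coordinate function of $\tilde\psi$ extends globally to $X$.

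Finally, setting $f := \varphi \circ g$, one computes
\[
f|_{Z_1} \;=\; \varphi \circ g|_{Z_1} \;=\; \varphi \circ \tilde\psi
\;=\; \varphi \circ \sigma \circ \psi \;=\; \iota_{Z_2} \circ \psi \;=\; \psi,
\]
so $f \in \End(X)$ extends $\psi$. The principal obstacle is the first step: guaranteeing that Kusakabe's surjective morphism $\varphi$ can be chosen smooth at preimages of every point of $\supp(Z_2)$. This is where the specific composite-spray construction of $\varphi$ enters, since varying the basepoint and the spray parameters shifts the smooth locus of $\varphi$ and can be used to cover any prescribed finite subset of $X$. The remaining steps --- formal lifting along a smooth morphism, and extension of a morphism from a finite subscheme of a quasiaffine variety --- are routine.
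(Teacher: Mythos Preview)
Your proposal is correct and follows essentially the same route as the paper: both derive the proposition from Kusakabe's epimorphism theorem (Theorem~\ref{thm:ForKusa}). The paper simply invokes Kusakabe's interpolation result \cite[Corollary~1.5]{Kus22} (Corollary~\ref{cor:Kusa} here), observes that it remains valid when the source $Y$ is quasiaffine rather than affine, and then takes $Y=X$ and $Z=Z_1$. What you have written is precisely an unpacking of the proof of that corollary: lift $Z_2$ through the smooth locus of $\varphi$, extend the resulting map $Z_1\to\A^N$ to $X$ using quasiaffineness, and compose.

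One remark: the step you flag as ``the principal obstacle'' is not an obstacle at all. Kusakabe's theorem, as stated in the paper, already asserts that $\varphi(\A^{n+1}\setminus\Sing(\varphi))=X$; that is, \emph{every} point of $X$ has a preimage at which $\varphi$ is smooth. Hence no adjustment of basepoints or spray parameters is needed to cover $\supp(Z_2)$ --- the formal lifting of $Z_2$ through $\varphi$ is available immediately from the statement of the theorem. With this clarification your argument goes through without further work.
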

The above proposition is an easy consequence of 
the following epimorphism theorem due to 
Kusakabe \cite[Theorem 1.2]{Kus22}, 
which generalizes an earlier result of Forstneri\v{c} 
\cite[Theorem 1.6]{For17b} 
valid for complete elliptic varieties. 
This phenomenon 
was expected in \cite[Sec. 3.4(C)]{Gro89}; see also \cite{Arz22} 
for analogs
 in the case of flexible varieties.
\begin{thm} 
\label{thm:ForKusa}
For any smooth (sub)elliptic variety $X$ of dimension $n$ 
there exists a morphism 
$\phi\colon\A^{n+1}\to X$ such that 
$\phi(\A^{n+1}\setminus\Sing(\phi))=X$ 
where 
\[\Sing(\phi)=\{x\in \A^{n+1} | \phi \,\,\,\text{is not smooth at}\,\,\, x\}\] 
is the singular locus of $\phi$. 
If $X$ is complete then the same is true with $\A^{n}$ instead of $\A^{n+1}$.
\end{thm}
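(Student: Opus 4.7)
The plan is to reduce Proposition \ref{prop:ht} to Kusakabe's epimorphism theorem (Theorem \ref{thm:ForKusa}) by a scheme-theoretic lifting argument. Set $n=\dim X$ and fix a morphism $\phi:\A^{n+1}\to X$ with $\phi(\A^{n+1}\setminus\Sing(\phi))=X$. Given an isomorphism $\alpha:Z_1\stackrel{\sim}{\longrightarrow} Z_2$ between zero-dimensional closed subschemes $Z_1,Z_2\subset X$, write $Z_1=\bigsqcup_i\Spec A_i$ with $A_i=\cO_{Z_1,x_i}$ Artinian local, let $y_i=\alpha(x_i)\in Z_2\subset X$, and let $\iota_2:Z_2\hookrightarrow X$. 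The goal is to produce $f\in\End(X)$ with $f|_{Z_1}=\iota_2\circ\alpha$.

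First, for each $i$ I would pick a preimage $a_i\in\phi^{-1}(y_i)\setminus\Sing(\phi)$, which exists by Theorem \ref{thm:ForKusa}. Smoothness of $\phi$ at $a_i$ entails formal smoothness there, so the morphism $\Spec A_i\to X$ coming from $\iota_2\circ\alpha|_{\Spec A_i}$ lifts across the nilpotent closed immersion $\{a_i\}=\Spec k(x_i)\hookrightarrow\Spec A_i$ to a morphism $\beta_i:\Spec A_i\to\A^{n+1}$ with $\phi\circ\beta_i=\iota_2\circ\alpha|_{\Spec A_i}$ and $\beta_i(x_i)=a_i$. Because the supports $\{x_i\}$ are distinct points, the $\beta_i$ assemble into a single morphism $\beta:Z_1\to\A^{n+1}$ satisfying $\phi\circ\beta=\iota_2\circ\alpha$.

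Second, I would extend $\beta$ to $\tilde\beta:X\to\A^{n+1}$. Since an $\A^{n+1}$-valued morphism is an $(n+1)$-tuple of regular functions, this reduces to surjectivity of $\cO_X(X)\to\cO_{Z_1}(Z_1)$. Embed $X$ as an open subvariety of an affine variety $\bar X$; since $Z_1$ is supported at finitely many closed points of $X$ that are also closed in $\bar X$, and its local scheme structure is encoded in the local rings $A_i=\cO_{X,x_i}/I_i=\cO_{\bar X,x_i}/I_i$, the subscheme $Z_1$ is closed in $\bar X$ too. The usual surjection $\cO_{\bar X}(\bar X)\twoheadrightarrow\cO_{Z_1}(Z_1)$ on an affine scheme, combined with $\cO_{\bar X}(\bar X)\subset\cO_X(X)$, yields the desired surjectivity. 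Setting $f=\phi\circ\tilde\beta$ gives an endomorphism of $X$ with $f|_{Z_1}=\phi\circ\beta=\iota_2\circ\alpha$.

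The conceptual core is the lifting step: the existence of a smooth preimage $a_i$ at every point $y_i\in X$, guaranteed precisely by Kusakabe's strengthening of Forstneri\v{c}'s theorem, is exactly what is needed for a finite-order scheme-theoretic jet at $x_i$ to be realized as the $\phi$-image of a jet in $\A^{n+1}$. I do not anticipate a serious obstacle beyond this; the function-extension step is routine from quasi-affineness, and no further geometric input is required.
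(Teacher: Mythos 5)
Your proposal does not prove the statement in question. The statement is Theorem \ref{thm:ForKusa} itself --- the existence, for any smooth (sub)elliptic $X$ of dimension $n$, of a morphism $\phi\colon\A^{n+1}\to X$ whose restriction to the smooth locus of $\phi$ is surjective onto $X$. Your very first step is ``fix a morphism $\phi:\A^{n+1}\to X$ with $\phi(\A^{n+1}\setminus\Sing(\phi))=X$'', i.e.\ you assume the conclusion of the theorem and then use it to derive Proposition \ref{prop:ht} (high transitivity of $\End(X)$). That is a proof of a downstream corollary, not of the theorem; no argument whatsoever is offered for why such a $\phi$ exists, which is the entire content of the statement. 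Constructing $\phi$ is nontrivial: it requires using the (sub)ellipticity of $X$ --- composing finitely many dominating sprays to produce, from a suitable affine source, a morphism that is smooth over every point of $X$ --- and this is exactly what Kusakabe's work \cite{Kus22} (building on Forstneri\v{c} \cite{For17b}) supplies. In the paper this theorem is in fact quoted from \cite[Theorem 1.2]{Kus22} without proof, so there is no internal argument to reproduce, but citing or reconstructing that construction is what a proof of this statement would have to do.

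For what it is worth, the lifting argument you give (using formal smoothness of $\phi$ at a point of $\phi^{-1}(y_i)\setminus\Sing(\phi)$ to lift a zero-dimensional jet from $X$ to $\A^{n+1}$, then extending the resulting $\A^{n+1}$-valued map from $Z_1$ to all of $X$ by quasi-affineness) is essentially the paper's route from Theorem \ref{thm:ForKusa} to Corollary \ref{cor:Kusa} and thence to Proposition \ref{prop:ht}, and that part reads correctly. But it answers a different question from the one posed.
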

Using the fact that any morphism of a zero-dimensional scheme $Z\to X$ 
can be lifted through a smooth morphism 
$\phi\colon\A^{n+1}\setminus\Sing(\phi)\to X$, 
Kusakabe derives the following interpolation result, 
see \cite[Corollary 1.5]{Kus22}; cf. \cite[Sec. 3.4(E)]{Gro89}. 
\begin{cor}\label{cor:Kusa} Let $X$ be a smooth (sub)elliptic variety, 
$Y$ be an affine variety and $Z$ 
be a zero-dimensional subscheme of $Y$. 
Then for any morphism $f \colon Z \to X$ there exists a 
morphism $\tilde f \colon Y\to X$ such that $\tilde f|_Z = f$.
\end{cor}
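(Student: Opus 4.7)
The plan is to reduce the interpolation problem to an extension problem in affine space, where it is automatic, using the dominating morphism from affine space supplied by Theorem \ref{thm:ForKusa}. Set $n = \dim X$ and apply that theorem to obtain $\phi \colon \A^{n+1}\to X$ with $U := \A^{n+1}\setminus \Sing(\phi)$ smooth over $X$ and $\phi(U) = X$. The strategy is: first lift $f \colon Z\to X$ through $\phi$ to a morphism $g \colon Z\to U$; then extend $g$ to a morphism $\tilde g \colon Y \to \A^{n+1}$; finally set $\tilde f := \phi\circ \tilde g$.

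For the lifting step, decompose $Z = \bigsqcup_{i=1}^m Z_i$ into its finitely many local Artinian components, with $Z_i = \Spec(A_i)$ supported at a single closed point $z_i$ and mapping set-theoretically to $x_i := f(z_i)\in X$. By surjectivity of $\phi|_U$, choose $y_i\in U$ with $\phi(y_i) = x_i$. Smoothness of $\phi$ at $y_i$ provides the infinitesimal lifting property: the morphism $\Spec(\kk) \to \A^{n+1}$ picking out $y_i$ extends along the closed immersion $(Z_i)_{\rm red}\hookrightarrow Z_i$ (a thickening by a nilpotent ideal) to a morphism $g_i\colon Z_i\to \A^{n+1}$ with $\phi\circ g_i = f|_{Z_i}$, and whose image lies in $U$ (near $y_i$). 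Assembling the $g_i$ yields $g\colon Z \to U$ with $\phi\circ g = f$.

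For the extension step, write $Y = \Spec(B)$ and let $I\subset B$ be the ideal defining $Z$, so $B/I$ is Artinian. The morphism $g$ corresponds to a $\kk$-algebra homomorphism $\kk[t_1,\ldots,t_{n+1}]\to B/I$, i.e.\ a choice of $n+1$ elements of $B/I$. Lifting each of these elements through the surjection $B\twoheadrightarrow B/I$ defines a $\kk$-algebra map $\kk[t_1,\ldots,t_{n+1}]\to B$, hence a morphism $\tilde g\colon Y \to \A^{n+1}$ with $\tilde g|_Z = g$. Then $\tilde f := \phi\circ \tilde g\colon Y\to X$ satisfies $\tilde f|_Z = \phi\circ g = f$, as required.

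The only delicate point is the first step: lifting $f$ through $\phi$ over the possibly non-reduced scheme $Z$. This is exactly where Theorem \ref{thm:ForKusa} does the essential work, because it produces a morphism $\phi$ that is not merely surjective but \emph{smooth} away from a proper subvariety, so the infinitesimal lifting property applies at each representative $y_i$. Once $g$ is in hand, extension to $\tilde g$ is immediate from the affineness of $Y$ and the fact that the target $\A^{n+1}$ is a coordinate ring generated by free variables.
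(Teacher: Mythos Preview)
Your proof is correct and follows exactly the approach the paper indicates: the paper does not give a full proof of this corollary but attributes it to Kusakabe, noting only that ``any morphism of a zero-dimensional scheme $Z\to X$ can be lifted through a smooth morphism $\phi\colon\A^{n+1}\setminus\Sing(\phi)\to X$.'' You have carried out precisely this lift via the infinitesimal lifting property of the smooth map $\phi|_U$, and then extended from $Z$ to the affine $Y$ using the freeness of the polynomial ring, which is the natural way to complete the sketch.
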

In fact, Corollary \ref{cor:Kusa}  remains valid with the same proof 
if one assumes $Y$ to be quasiaffine. 
Letting in Corollary \ref{cor:Kusa} 
$Y=X$ and $Z_1=Z$  
we can extend the given morphism $f\colon Z_1\to Z_2$ 
onto a zero-dimensional subscheme $Z_2$ of $X$
to an endomorphism $\tilde f \colon X\to X$. 
This yields Proposition \ref{prop:ht}.
Furthermore, taking in Corollary \ref{cor:Kusa} $Y=\A^1$ 
we obtain the following corollary,  cf. \cite[Sec. 3.4(B)]{Gro89} 
and \cite[p.~1661]{KZ00}.
\begin{cor}\label{cor:Kus2} Any smooth elliptic variety $X$ is 
\emph{$\A^1$-rich}, that is, through any 
$m$ distinct points of $X$ passes an 
$\A^1$-curve $\A^1\to X$. Moreover, given a finite collection 
of curve jets $j_1,\ldots,j_m$ on $X$ 
with not necessarily distinct centers, there exists an 
$\A^1$-curve $f\colon \A^1\to X$ interpolating these jets. 
\end{cor}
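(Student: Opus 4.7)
The plan is to derive both assertions as immediate applications of the interpolation result Corollary \ref{cor:Kusa} by specializing $Y = \A^1$, which is affine.

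For the first statement, I would pick any $m$ distinct points $t_1, \ldots, t_m \in \A^1$ and view the reduced set $Z = \{t_1, \ldots, t_m\}$ as a zero-dimensional subscheme of $\A^1$. The assignment $t_i \mapsto x_i$ then defines a morphism $f \colon Z \to X$, and Corollary \ref{cor:Kusa} produces an extension $\tilde f \colon \A^1 \to X$ with $\tilde f(t_i) = x_i$ for every $i$. This $\tilde f$ is the desired $\A^1$-curve through the prescribed points.

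For the jet-interpolation statement, I would first recall that a curve jet of order $k$ centered at a point $x \in X$ is the same datum as a morphism $\Spec \kk[\epsilon]/(\epsilon^{k+1}) \to X$ sending the closed point to $x$. So given jets $j_1, \ldots, j_m$ of orders $k_1, \ldots, k_m$ with (possibly repeated) centers in $X$, I would again pick distinct points $t_1, \ldots, t_m \in \A^1$ and form this time the non-reduced zero-dimensional subscheme
\[ Z = \bigsqcup_{i=1}^m \Spec \cO_{\A^1, t_i}/\fm_{t_i}^{k_i+1} \subset \A^1. \]
After fixing a local uniformizer at each $t_i$, the $i$-th component of $Z$ is identified with $\Spec \kk[\epsilon]/(\epsilon^{k_i+1})$, so $j_i$ becomes a morphism from it to $X$; these assemble into a single morphism $f \colon Z \to X$. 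A second application of Corollary \ref{cor:Kusa} yields an extension $\tilde f \colon \A^1 \to X$ whose $k_i$-jet at $t_i$ coincides with $j_i$ by construction.

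There is no substantial obstacle in either step: the deep input is already in hand via Theorem \ref{thm:ForKusa} and its consequence Corollary \ref{cor:Kusa}. The only minor point worth checking is that Corollary \ref{cor:Kusa} is indeed stated for arbitrary, not necessarily reduced, zero-dimensional subschemes $Z$, which is precisely what permits the fat-point construction used for jet interpolation; the previously noted extension to quasiaffine $Y$ is not needed here, as $\A^1$ is affine.
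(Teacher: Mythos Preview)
Your proposal is correct and follows exactly the paper's approach: the corollary is obtained by specializing $Y=\A^1$ in Corollary~\ref{cor:Kusa}, using the reduced subscheme $Z$ for the first assertion and a fat-point subscheme for the jet version. The paper states this in a single line; your write-up simply supplies the details.
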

Notice that for a  flexible smooth affine variety $X$
one can find an orbit of a $\G_{\mathrm a}$-action on $X$ 
interpolating given curve jets, see \cite[Theorem 4.14]{AFKKZ13}.  
\section{Appendix B: Gromov's Extension Lemma}\label{app:el}
For the reader's convenience we recall the formulation 
of Proposition \ref{lem:ext}. 
\begin{prop}\label{lem:ext-Shulim} 
Let $D$ be a reduced effective divisor on $X$
and $(E,p,s)$  be a spray  
on $U=X\setminus {\rm supp}(D)$  with values in $X$ 
such that $p\colon E\to X$ is a trivial 
bundle of rank $r\ge 1$.
Then there exists a spray $(\tilde E,\tilde p,\tilde s)$ on $X$ 
whose restriction to $U$ is isomorphic to $(E,p,s)$ and 
such that 
$\tilde s|_{{\tilde p}^{-1}(X\setminus U)}=
\tilde p|_{{\tilde p}^{-1}(X\setminus U)}$ 
and for each $x \in U$ the $\ts$-orbit of 
$x$ coincides with the $s$-orbit of $x$. 
\end{prop}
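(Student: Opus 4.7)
The plan is to take $\tilde E$ to be a suitable twist of $E$ by the invertible sheaf $\cO_X(-ND)$ for $N$ large, so that $\tilde E|_U$ is canonically identified with $E|_U$ and $\tilde E$ admits a natural bundle morphism $\phi_N\colon\tilde E\to E$ whose image over $\supp(D)$ is the zero section of $E$. Composing $\phi_N$ with $s$ formally gives $\tilde s_0=s\circ\phi_N$ defined a priori only over $U$, and the task is to show that for $N$ sufficiently large this extends regularly across $D$, essentially because a high power of a local equation of $D$ kills the poles of $s$ along $D$.

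\emph{Step 1 (Construction of $\tilde E$).} Let $\tilde E$ be the rank $r$ vector bundle associated to the locally free sheaf $\cO_X(-ND)^{\oplus r}$, with $\tilde p\colon\tilde E\to X$ the projection. The inclusion $\cO_X(-ND)^{\oplus r}\hookrightarrow\cO_X^{\oplus r}$ yields a bundle morphism $\phi_N\colon\tilde E\to E$ over $X$; on an affine chart $V\subset X$ with $D\cap V=\{f=0\}$, $\phi_N$ reads locally as $(x,\tau)\mapsto(x,f(x)^N\tau)$. Over $U$, $\phi_N$ is an isomorphism, giving a trivialization $\tilde E|_U\cong E|_U$; over $\supp(D)$ the image of $\phi_N$ is exactly the zero section of $E$. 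Transporting $s$ along $\phi_N|_U$ produces $\tilde s_0\colon\tilde E|_U\to X$, locally of the form $(x,\tau)\mapsto s(x,f(x)^N\tau)$.

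\emph{Step 2 (Extension across $D$).} The heart of the proof is to show that for $N$ large, $\tilde s_0$ extends to a regular morphism $\tilde s\colon\tilde E\to X$. Fix finite affine open covers $\{V_\alpha\}$ of $X$ with $D\cap V_\alpha=\{f_\alpha=0\}$ and $\{W_j\}$ of $X$. At a point $x_0\in\supp(D)\cap V_\alpha$ lying in some $W_j$, the relation $s(x,0)=x$ implies that $s^{-1}(W_j)$ contains an open neighborhood of the zero section over $V_\alpha\cap U\cap W_j$. On this neighborhood, pullbacks of the (finitely many) generators of $\cO(W_j)$ by $s$ admit Taylor expansions along the zero section as power series in $t_1,\ldots,t_r$ with coefficients in $\cO(V_\alpha)[f_\alpha^{-1}]$; regularity of $s$ and finiteness of the generating set force only finitely many monomials to appear, with bounded pole order $k$ along $D$. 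Substituting $t_i\mapsto f_\alpha^N\tau_i$ multiplies every degree-$m$ monomial by $f_\alpha^{mN}$: the constant term equals $x$ (regular on $V_\alpha$) and every $m\ge 1$ term picks up at least $f_\alpha^N$, so for $N\ge k$ all poles are killed and $\tilde s_0$ extends regularly to a neighborhood of $\tilde E_{x_0}$ landing in $W_j$. Since the construction is intrinsic to the twisted sheaf $\cO_X(-ND)^{\oplus r}$, the local extensions patch into a global morphism $\tilde s\colon\tilde E\to X$.

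\emph{Step 3 (Verifying the assertions).} By construction $\phi_N$ sends $\tilde E|_{\supp(D)}$ into the zero section of $E$; combined with $s|_Z=p|_Z$ this gives $\tilde s|_{\tilde p^{-1}(X\setminus U)}=\tilde p|_{\tilde p^{-1}(X\setminus U)}$. For $x\in U$, the restriction $\phi_N|_{\tilde E_x}\colon\tilde E_x\to E_x$ is a linear isomorphism, so $\tilde s(\tilde E_x)=s(\phi_N(\tilde E_x))=s(E_x)$ and the $\tilde s$-orbit of $x$ coincides with its $s$-orbit. The main obstacle is Step 2: one must control the pole orders uniformly across the finite cover, despite the fact that the preimages $s^{-1}(W_j)$ are only open (not of product form) in $(V_\alpha\cap U)\times\A^r$; this forces one to analyze formal neighborhoods of the zero section and verify that the Taylor coefficients are indeed rational functions with bounded poles along $D$.
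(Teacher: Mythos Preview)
Your overall strategy—twist $E$ by $\cO_X(-ND)$ and show that $s\circ\phi_N$ extends across $D$ for $N\gg0$—is exactly the paper's. The gap is in Step~2, precisely at the point you yourself flag as the ``main obstacle''.

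The claim that ``regularity of $s$ and finiteness of the generating set force only finitely many monomials to appear, with bounded pole order $k$'' is not correct. The function $g\circ s$ is regular only on the open set $s^{-1}(W_j)\subset (V_\alpha\cap U)\times\A^r$, and such a function is in general \emph{not} polynomial in $t$; its Taylor expansion along the zero section is an infinite series whose coefficients can have pole orders growing with the degree. A toy model: with $h=f_\alpha$ and one fiber coordinate $t$, the function $h/(h-t)$ is regular near the zero section over $U$, restricts to $1$ on $\{t=0\}$, and has Taylor expansion $\sum_{m\ge0}t^m/h^m$, so the pole order of the $m$-th coefficient is $m$, not bounded. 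Thus neither ``finitely many monomials'' nor ``bounded pole order'' holds, and your substitution argument does not close as written. (A related issue: even if every coefficient $a_m f^{mN}$ were regular, you would still owe an argument that the resulting formal series is an honest regular function, and that the extension is regular on the \emph{full} fiber $\tilde E_{x_0}$ rather than just near the zero.)

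The paper avoids Taylor series entirely. On an affine chart $\omega$ it embeds $\omega\hookrightarrow\A^m$ and writes the coordinate functions of $s$ as genuine rational functions $q(v)/\big(h^k r(v)\big)$ with $q,r\in\cO(\omega)[v]$, $q(0)=0$, and (after clearing common factors) $r(0)=a\,h^l$ with $a$ a unit on $\omega$. Substituting $v\mapsto h^n v$ gives
\[
\frac{q(h^nv)}{h^k r(h^nv)}=\frac{h^{n-k-l}\,\tilde q(v)}{a+h^{n-l}\,\tilde r(v)},
\]
which for $n>k+l$ is regular on a neighborhood $\Omega$ of $\omega\times\{0\}$. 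One more twist ($n\mapsto n+1$) then pulls $\Omega$ back to a set containing the full fibers over $\supp(D)$. The point you are missing is precisely this control of the \emph{denominator}: it is not constant in $v$, and the substitution works because $r(h^nv)$ degenerates to the unit $a$ times $h^l$ near $D$, not because the numerator has bounded pole order.
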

\begin{proof} Extend $E$ to a trivial vector bundle on $X$; 
abusing notation, the latter is  again denoted by $p\colon E\to X$. 
Let $\xi$ be a canonical section of $\cO_X(D)\to X$ 
such that ${\rm div}(\xi)=D$. 

Consider the twisted vector bundle 
$p_n\colon E_n=E\otimes \cO_X(-nD)\to X$ where $n\in\NN$.
Since $\xi$ does not vanish on $U$ 
the homomorphism $\phi_n\colon E_n\to E$ 
induced by the tensor multiplication by $\xi^n$ 
restricts to an isomorphism on $U$ which sends 
$s_n:=s\circ \phi_n$ to $s$. 
We claim that $s_n$ extends to a morphism 
$E_n \to X$ for a sufficiently
large $n$. It is easily seen that in the latter case
$(\tE, \tp,\ts)=(E_n, p_n, s_n)$ is a desired spray on $X$. 

The argument is local and, thus, 
we restrict consideration to 
 an affine neighborhood  $\omega\subset X$ 
 of a given point $x_0\in\supp(D)$
 such that 
 \begin{itemize}
\item $D|_\omega=h^*(0)$ for some 
$h\in\cO_\omega(\omega)$ and 
\item $E_n|_\omega\cong_\omega \omega\times \A^r$ 
is a trivial vector bundle.
\end{itemize}
 Notice that over 
 $\omega^*=\omega\setminus \supp(D)$ the spray
 $(E_n, p_n, s_n)$ is the $h^n$-homothety of $(E,p,s)$. 
 The latter means that $s_n$ is given, after trivialization, 
 by 
 \[s_n\colon (x,v)\mapsto s(x,h^n(x)v)\qquad 
 \forall (x,v)\in \omega^*\times\A^r.\] 

Let  $\omega\hookrightarrow \A^m$ 
be a closed embedding.
Then $s$ yields a rational map 
\[s\colon\omega\times \A^{r}\dashrightarrow 
\A^m\quad\text{of the form}
\quad (x, v) \mapsto x+ \psi(x,v)\]
where $\psi$ is a rational vector function
which is regular 
in a neighborhood of $\omega^*\times\{0\}$ and vanishes on 
$\omega^*\times\{0\}$.
Every coordinate function  of $\psi$ 
can be written in the form 
\[\frac{q(v)}{h^kr(v)}\quad\text{where}\quad v=(t_1, \ldots, t_r) 
\in\A^r\quad\text{and}\quad
q,r\in\cO_{\omega}(\omega)[t_1,\ldots,t_r]\] 
are such that $q(0)=0$ and the zero loci of $r$ and $h$ 
have no common component in $\omega\times\A^r$.

Furthermore,
we may suppose
that the divisors $r^*(0)$ and $q^*(0)$ have no common 
irreducible component $T$ in $\omega\times\A^r$ passing through 
$(x_0,0)\in\omega\times\A^r$. 
Indeed,  let $T$ be such a component. It is well known that 
$\Pic (\omega\times\A^r)={\rm pr}_1^*(\Pic (\omega))$. Hence,
 the divisor $T-{\rm pr}_1^* (T_0)$ is principal 
 for some divisor $T_0$ on $\omega$.  
Shrinking $\omega$ one may suppose that $T_0$ is principal. 
Hence, $T$ is principal,
 i.e.,  $T=f^*(0)$ for some regular function
 $f$ on $\omega\times \A^{r}$. Thus, dividing $r$ and $q$ 
 by $f$ we can get rid of $T$.

This implies that $r$ does not vanish on $\omega^*\times\{0\}$ 
since otherwise $\psi$ is not regular on $\omega^*\times\{0\}$.
In particular, $r(0)=ah^l$ where $a$ is a 
non-vanishing function on $\omega$.

The $h^n$-homothety $s_n$ of $s$ yields a rational map 
\[s_n\colon\omega\times \A^{r}\dashrightarrow \omega
\subset \A^m,\quad (x, v) \mapsto x+ \psi_n(x,v),\] 
where $\psi$ is a rational vector function
which is regular 
in a neighborhood of $\omega^*\times\{0\}$ and vanishes on 
$\omega^*\times\{0\}$.
Every coordinate function of  $\psi_n\in $ is of the form 
\[\frac{q(h^nv)}{h^kr(h^nv)}.\]
Since $q(0)=0$ and $[r(h^nv)-r(v)]|_{v=0}=0$ we see that 
\[q(h^nv)=h^n\tilde q(v)\quad\text{and}\quad
r(h^nv)=r(0) +h^n\tilde r(v)= h^la+h^n\tilde r(v).\] 
Hence, for $n> k+l$
the function 
\[\frac{q(h^nv)}{h^kr(h^nv)}=
\frac{h^{n-k-l}\tilde q(v)}{a+h^{n-l}\tilde r(v)}\]
is regular in a neighborhood $\Omega$ of $\omega\times\{0\}$ 
and vanishes to order $n-k-l$ 
on $({\rm supp}(D)\times\A^r)\cap\Omega$. 

Consider now $s_{n+1}=s\circ\phi_{n+1}=s_{n}\circ\phi_1$ where, 
after trivialization,
 \[\phi_{1}|_{\omega\times \A^r }\colon \omega\times \A^r 
 \to \omega\times \A^r, 
 \quad (x,v) \mapsto (x,h(x)v).\]
 Letting $\widetilde\Omega=\phi_{1} ^{-1} (\Omega) 
 \subset \omega\times\A^r$ we see that 
 \[\phi_{1} (\widetilde\Omega) \subset \Omega\quad\text{and}\quad 
 (\supp(D)\cap\omega)\times\A^r\subset \widetilde\Omega.\]
 Hence, $s_{n+1}$ is regular on $\widetilde\Omega$ and so, 
 on $p_{n+1}^{-1}(\supp(D)\cap\omega)$.
This yields a spray $(E_{n+1}, p_{n+1},s_{n+1})$ on 
$\omega\cup U$ with values in $X$. 
Choosing a finite cover of $\supp(D)$ by affine open subsets $\omega_i$ 
and a sufficiently large $n$ we get the desired extension of $(E,p,s)$ to 
a spray $(\tE,\tp,\ts)$ on $X$. 
\end{proof}
\brem The same proof works also for the original Gromov's Localization Lemma, 
see \cite[3.5.B]{Gro89}. 
\erem

\medskip

\noindent {\bf Acknowledgments.} We are grateful to Ivan Arzhantsev 
for useful consultation about flag varieties. 
His suggestions allowed to simplify 
the proofs in this part; the original proofs 
were much longer.
%
%%%%%%%%%%%%%%%%%%%%%%%%%%%%%%%%%%%%%%%%%%%%%%%%%%%%%%%%%%%%%%%%%%%%%%%
%%%%%%%%%%%%%%%%%%%%%%%%%%%%%%%%%%%%%%%%%%%%%%%%%%%%%%%%%%%%%%%%%%%%%%%


\begin{thebibliography}{KaMi} 
%
\bibitem[Akh95]{Akh95}
D.~N.~Akhiezer, \emph{Lie group actions in complex analysis},
Aspects of Mathematics E 27. Friedr. Vieweg $\&$ Sohn, 1995. 
%
\bibitem[Arz22]{Arz22}
I.~Arzhantsev, 
\emph{On images of affine spaces}. 
arXiv:2209 (2022).
%
\bibitem[AFKKZ13]{AFKKZ13} I.~V.~Arzhantsev, H.~Flenner, S.~Kaliman,
F.~Kutzschebauch, and M.~Zaidenberg,
{\em Flexible varieties and automorphism groups},
Duke Math.\ J.\ {\bf 162} (2013), no. 4, 767--823.
%
\bibitem[AKZ12]{AKZ12}
I.~V. Arzhantsev,  K.~G. Kuyumzhiyan, and M.~G. Zaidenberg,
\emph{Flag varieties, toric varieties, and suspensions: 
Three instances of infinite transitivity},
Sbornik: Mathematics 203:7 (2012), 923--949.
%
 \bibitem[APS14]{APS14}
 I.~Arzhantsev, A.~Perepechko, and H.~S\''uss, 
 \emph{Infinite transitivity on universal torsors}, 
 J. London Math. Soc. 89:3 (2014), 762--778.
%
\bibitem[CPPZ21]{CPPZ21}
I.~Cheltsov, J.~Park, Yu.~Prokhorov, and M.~Zaidenberg,
\emph{Cylinders in Fano varieties},
EMS Surv. Math. Sci. 8 (2021), 39--105,
DOI 10.4171/EMSS/44.
%
\bibitem[CPW16]{CPW16}
I.~Cheltsov, J.~Park, and J.~Won, 
\emph{Affine cones over smooth cubic surfaces}. 
J. Eur. Math. Soc. (JEMS) 18:7), 1537--1564 (2016. 
Zbl 1386.14068, MR 3506607.
%
\bibitem[FKZ16]{FKZ16} H.~Flenner, S.~Kaliman, and M.~Zaidenberg, 
{\em A Gromov-Winkelmann type theorem for flexible varieties},  
J. Eur. Math. Soc. (JEMS) 
{\bf18} (2016), no. 11, 2483-2510. 
%
\bibitem[For17a]{For17a} 
F.~Forstneri\v{c}, 
{\em Stein Manifolds and Holomorphic Mapping.
The Homotopy Principle in Complex Analysis}, 
Springer-Verlag, Berlin-Heidelberg,  2011.
%
\bibitem[For17b]{For17b} 
F.~Forstneri\v{c}, 
\emph{Surjective holomorphic maps onto Oka manifolds}. 
In: \emph{Complex and symplectic geometry}, 
vol. 21 of Springer INdAM Ser., 73--84. Springer, Cham, 2017.
%
\bibitem[For23]{For23} 
F.~Forstneri\v{c}, 
\emph{Recent~developments~on~Oka~manifolds},
Indag.~Math.~(N.S.) ~2023~(to appear). 
https://doi.org/10.1016/j.indag.2023.01.005.~Available online at:
https://www.sciencedirect.com/science/article/pii/S0019357723000058?via$\%$3Dihub$\#$sec6
%
\bibitem[Fre06]{Fre06} G. ~Freudenburg, 
{\em Algebraic theory of locally nilpotent derivations}, 
Encyclopedia of Mathematical Sciences,  Springer,
 Berlin-Heidelberg-New York, 2006.
%
\bibitem[Gro89]{Gro89}
M.~Gromov, 
\emph{Oka's principle for holomorphic sections of elliptic bundles}, 
J. Amer. Math. Soc. 2 (1989), 851--897. MR1001851. DOI10.2307/1990897. 
%
\bibitem[Har04]{Har04}
R.~Hartshorne, 
\emph{Algebraic Geometry}. 
Graduate Texts in Mathematics 52, Springer 2004. 
%
\bibitem[Hum95]{Hum95}
J.~E.~Humphryes,
\emph{Linear Algebraic Groups},
 Graduate Texts in Mathematics, vol. 21. Springer, 1995.
%
\bibitem[KKT18]{KKT18}
S.~ Kaliman, F.~Kutzschebauch, and  T.~T.~Truong,
\emph{On subelliptic manifolds},
Israel J. Mathem. 228  (2018), 229--247.
%
\bibitem[KZ00]{KZ00}
S.~ Kaliman and M.~Zaidenberg,
\emph{Miyanishi's characterization of the affine 
$3$-space does not hold in higher dimensions}.
Ann. Inst. Fourier (Grenoble) 50:6 (2000), 1649--1669.
%
\bibitem[KZ23]{KZ23}
S.~Kaliman and M.~Zaidenberg, 
\emph{Gromov ellipticity and subellipticity}.
 arXiv:2301.03058 (2023).
%
\bibitem[KPZ11]{KPZ11}
T.~Kishimoto, Y.~Prokhorov, and M.~Zaidenberg, 
\emph{Group actions on affine cones}. 
Centre de Recherches Math\'ematiques. 
CRM Proceedings and Lecture Notes 54, 2011,
123--163. 
%
\bibitem[KPZ13]{KPZ13}
T.~Kishimoto, Yu.~Prokhorov, and M.~Zaidenberg,
\emph{$\mathbb{G}_a$-actions on affine cones}.
Transform. groups 18:4 (2013), 1137--1153.
%
\bibitem[Kus22]{Kus22}
Y.~Kusakabe.
\emph{Surjective morphisms onto subelliptic varieties}.
arXiv:2212.06412 (2022). 
%
\bibitem[LT17]{LT17}
F.~L\'arusson and T.~T.~Truong,
\emph{Algebraic subellipticity and dominability of blow-ups of affine spaces}.
Documenta Mathematica 22 (2017), 151--163.
%
\bibitem[Mag75]{Mag75}  A.~R.~ Magid, 
\emph{The Picard sequences of a fibration}, 
Proc. Amer. Math. Soc. {\bf 53} (1975), no. 1, 37-40. 
%
\bibitem[MPS18]{MPS18}
M.~Michałek, A.~Perepechko, and H.~Süß,
\emph{Flexible affine cones and flexible coverings},
Math. Z. 290 (2018), 1457--1478.
%
\bibitem[Per13]{Per13}
A.~Y.~Perepechko,
\emph{Flexibility of affine cones over del Pezzo surfaces of degree $4$ and $5$}.
Funct. Anal. Appl. 47 (2013), 284--289. Zbl 1312.14099, MR 3185123.
%
\end{thebibliography}
\end{document}